\documentclass[reqno, 12pt,draft]{amsart}
\usepackage{amsmath,amsthm,amsfonts,amssymb}
\usepackage[cp1251]{inputenc}
   \usepackage[english]{babel}
\date{}
\newtheorem{theorem}{Theorem}[section]

\newtheorem{corollary}[theorem]{Corollary}

\numberwithin{equation}{section}

\begin{document}

\centerline{\sc The Dirichlet problem for double divergence form elliptic equations}

\centerline{\sc  with measures as boundary conditions}

\vspace*{0.2cm}

\centerline{\sc V.I. Bogachev$^{a,b,c}$, S.V. Shaposhnikov$^{a,b}$}

\vskip .1in

$^{a}$ Department of Mechanics and Mathematics, Moscow State University, 119991 Moscow, Russia

$^{b}$ National Research University Higher School of Economics, Myasnitskaya 20, 101000 Moscow, Russia

$^{c}$ The corresponding author, email: vibogach@mail.ru

\vskip .1in

\vspace*{0.2cm}

{\small Abstract.
We introduce and study the  Dirichlet problem for double divergence  form elliptic  equations
with coefficients of low regularity and boundary conditions given by general Borel measures.
Under broad assumptions we establish the solvability of this problem. It is also
shown that a solution to a double divergence form equation
on a domain serves as a solution to the  Dirichlet problem on inner subdomains.
The obtained results are applied to the study of properties of solutions to stationary
Fokker--Planck--Kolmogorov equations.
}

Keywords: double divergence form elliptic equation, Kolmogorov equation, Dirichlet problem,
boundary condition

MSC: 35J15, 35J25, 35J67

\section{Introduction}

In  this paper we introduce and study the  Dirichlet problem for double divergence  form elliptic  equations with coefficients of low regularity,
an important particular case of which are stationary   Fokker--Planck--Kolmogorov equations,
with boundary conditions given by general Borel measures.
Under broad assumptions we establish the solvability of this problem, moreover, it is shown that a solution to a double divergence form equation
on a domain serves as a solution to the  Dirichlet problem on inner subdomains.
The obtained results are applied to the study of properties of solutions to  Fokker--Planck--Kolmogorov equations and are also used for
correcting some flaws detected in a number of recent papers.

Let $\Omega\subset \mathbb{R}^d$ be a nonempty open set.
Given a mapping $x\mapsto S(x)=(s^{ij}(x))_{1\le i, j\le d}$
from $\Omega$ to the space of symmetric matrices and a  mapping $x\mapsto T(x)=(t^i(x))_{1\le i\le d}$
from $\Omega$ to~$\mathbb{R}^d$, where the functions $s^{ij}, t^i$ belong to the
class $L^1_{loc}(\Omega)$ of functions integrable on compact sets in~$\Omega$,
we set
$$
{\rm div}^2 S=\sum_{i, j=1}^d\partial_{x_i}\partial_{x_j}s^{ij}, \quad {\rm div}\,T=\sum_{i=1}^d\partial_{x_i}t^i,
$$
where the derivatives are understood in the sense of distributions.
The mappings $S$ and $T$ generate the elliptic operator
\begin{multline*}
L_{S, T}u(x)={\rm trace}\bigl(S(x)D^2u(x)\bigr)+\langle T(x), \nabla u(x)\rangle
\\
=\sum_{i,j\le d} s^{ij}(x)\partial_{x_i}\partial_{x_j}u(x)+\sum_{i\le d}t^i(x)\partial_{x_i}u(x).
\end{multline*}
The formally adjoint operator $L_{S, T}^{*}$ has the form
$$
L_{S, T}^{*}u={\rm div}^2 (uS)-{\rm div}(uT).
$$

Let us consider the double divergence form elliptic equation
\begin{equation}\label{eq1}
{\rm div}^2(\varrho A)-{\rm div}(\varrho b)={\rm div}^2G-{\rm div}h,
\end{equation}
where the vector fields $b(x)=(b^i(x))_{1\le i\le d}$ and $h(x)=(h^i(x))_{1\le i\le d}$
and the symmetric matrices $A(x)=(a^{ij}(x))_{1\le i, j\le d}$ and $G(x)=(g^{ij}(x))_{1\le i, j\le d}$
are given by Borel functions $b^i$, $h^i$, $a^{ij}$ and~$g^{ij}$, respectively, on the domain $\Omega$.

A function $\varrho\in L^1_{loc}(\Omega)$ is called a solution to equation (\ref{eq1})
if 
$$
a^{ij}\varrho, \ b^i\varrho, \ h^i, g^{ij}\in L^1_{loc}(\Omega)
$$
 and for every function $u$ from the
 class $C_0^{\infty}(\Omega)$ of infinitely differentiable
functions with compact support in $\Omega$ the equality
$$
\int_{\Omega}L_{A, b}u(x)\varrho(x)\,dx=
\int_{\Omega}L_{G, h}u(x)\,dx
$$
holds.

By means of the operators $L_{A, b}^{*}$ and $L_{G, h}^{*}$ equation (\ref{eq1}) can be written in a shorter form
$$
L_{A, b}^{*}\varrho=L_{G, h}^{*}1.
$$
In the case where $h=0$, $G=0$ this is the stationary  Kolmogorov  (or Fokker--Planck--Kolmogorov) equation
$$
L_{A,b}^*\varrho=0,
$$
which under broad conditions is satisfied for the densities of stationary
measures of the diffusion process with the   diffusion matrix $\sqrt{2}A$ and  the drift~$b$, see~\cite{BKRS}.
The  Kolmogorov  equation is usually considered with respect to Borel measures $\mu$ on $\Omega$ and is also written in the
form
$$
L_{A,b}^*\mu=0
$$
with the same interpretation as the identity
$$
\int_{\Omega}L_{A, b}u(x)\, \mu(dx)=0, \quad u\in C_0^{\infty}(\Omega),
$$
where the local integrability of $a^{ij}, b^i$ with respect to~$\mu$ is required.
However, in the  case of a nondegenerate  matrix $A$ (which is considered below) any solution $\mu$ is given by a density
(see \cite{BKR} and \cite{BKRS}).

We also observe that one can include in the equation a term $c\varrho$ with a function $c$ by taking the
 operator $L_{A,b,c}\varrho=L_{A,b}\varrho +c\varrho$
in place of $L_{A,b}$, but for the   existence theorem in the  Dirichlet problem this leads
to rather cumbersome conditions even for constant~$c$,
while in all other assertions under suitable conditions of local integrability of $c$ the function $c\varrho$ can be written as
 $c\varrho={\rm div} h_0$ with some vector field~$h_0$ or as
$c\varrho={\rm div}^2 G_0$ with some matrix mapping~$G_0$, which leads merely to replacing of~$h$ or~$G$.

We assume throughout that the coefficients of the equation satisfy the following conditions:

\vspace*{0.2cm}

{\bf (H1)} all functions $a^{ij}$ are continuous on $\Omega$ and there
is a nondecreasing continuous function $\omega$ on $[0, +\infty)$ such that $\omega(0)=0$ and
the inequality $\|A(x)-A(y)\|\le\omega(|x-y|)$ holds for all $x, y\in\Omega$,

\vspace*{0.2cm}

{\bf (H2)} there exists a  number $\theta>0$ such that
$\theta {\rm I}\le A(x)\le \theta^{-1}{\rm I}$ for all $x\in\Omega$, where ${\rm I}$ is the unit matrix,

\vspace*{0.2cm}

{\bf (H3)} for some $p>d$ we have the inclusions
$$
b^i\in L^p_{loc}(\Omega), \quad h^i\in L^1_{loc}(\Omega), \quad g^{ij}\in L^{p'}_{loc}(\Omega),
$$
where $p'=p/(p-1)$ and the class $L^p_{loc}(\Omega)$ consists of measurable functions integrable on compacta in $\Omega$ to power~$p$.

\vspace*{0.2cm}

Note that unlike the matrix  $A$ the matrix $G$ need not be nonnegative definite, i.e., the operator $L_{G, h}$
need not be elliptic.

Apart solutions to equation (\ref{eq1}) on $\Omega$ we consider solutions to the  Dirichlet problem for equation (\ref{eq1})
on domains contained with closure in $\Omega$.

Let $ D $ be a bounded domain with smooth boundary  (of class $C^2$) and $\overline{D}\subset\Omega$.
By $W^{p,2}(D)$ we denote the Sobolev  space of functions in $L^p(D)$ for which all
first and second order generalized partial derivatives  also belong to~$L^p(D)$.

Let $\eta$ be a bounded Borel measure on $\partial  D $ (possibly, signed).
By $\sigma_{d-1}$ we denote the standard surface measure on $\partial  D $ (the Hausdorff measure) and by $\nu(x)$
the unit outer normal to $\partial D $ at the point~$x$.

We say that a function $\varrho\in L^{p'}( D )$ is a solution to the  Dirichlet problem
\begin{equation}\label{dir}
\left\{\begin{array}{lc}
  {\rm div}^2 (\varrho A)-{\rm div}(\varrho b)={\rm div}^2G-{\rm div}\,h, & x\in  D , \\
  \\
  \varrho|_{\partial  D }=\eta+\kappa\sigma_{d-1},
\end{array}\right.
\end{equation}
where the function $\kappa$ is defined by the formula
\begin{equation}\label{dens}
\kappa(x)=\frac{\langle G(x)\nu(x), \nu(x)\rangle}{\langle A(x)\nu(x), \nu(x)\rangle},
\end{equation}
if for every function $u\in W^{p, 2}( D )\cap C^1(\overline{ D })$ vanishing on $\partial  D $ the equality
\begin{equation}\label{int}
\int_{ D }L_{A, b}u(x)\varrho(x)\,dx
=\int_{ D }L_{G, h}u(x)dx+\int_{\partial D }\langle A(x)\nabla u(x), \nu(x)\rangle\,\eta(dx)
\end{equation}
holds.

It should be emphasized that the expression $\kappa\sigma_{d-1}$ in \eqref{dir} in general is just a symbol and does not enter
the defining identity~\eqref{int}, but it becomes
meaningful if $G$ is continuous: in that case $\kappa$ is also continuous and can serve as a density with respect to the measure~$\sigma_{d-1}$.
We shall see below that in case of smooth coefficients the
second line in \eqref{dir} holds as the equality for measures $\varrho\, \sigma_{d-1}= (\varrho -\kappa)\, \sigma_{d-1}+\kappa\, \sigma_{d-1}$ ,
i.e., $\eta=(\varrho -\kappa)\, \sigma_{d-1}$.

Let us explain whence the boundary condition in \eqref{dir} appears in the case of $A={\rm I}$, $h=0$, $G=0$, and smooth~$b$.
Let $\varrho$ be a solution to the equation $L_{{\rm I}, b}^*\varrho=0$ in~$\Omega$.
Then $\varrho\in C^\infty(\Omega)$, hence for all functions $u\in C^2(\overline{ D })$ vanishing on~$\partial D$ we have by the
integration by parts formula
\begin{multline*}
\int_{ D } L_{{\rm I},b}u\varrho\, dx=
\int_{\partial  D }\langle \nabla u,\nu\rangle \varrho\, d\sigma_{d-1}-
\int_{ D } \langle \nabla u,\nabla \varrho\rangle  \, dx +
\int_{ D } \langle \nabla u, b\rangle \varrho  \, dx
\\
=\int_{\partial  D }\langle \nabla u,\nu\rangle \varrho\, d\sigma_{d-1},
\end{multline*}
since
$$
\int_{ D } \langle \nabla u,\nabla \varrho\rangle  \, dx -
\int_{ D } \langle \nabla u, b\rangle \varrho  \, dx
=-\int_{ D }  u[\Delta\varrho -{\rm div}(\varrho b)]\, dx=0
$$
due to the equation $\Delta\varrho -{\rm div}(\varrho b)=0$. Thus, here the measure $\eta$ is given by density $\varrho$ with respect to
the surface measure, i.e., is the restriction of the solution itself. The situation is similar for smooth nonzero~$G$. In that case
we have an extra term on the right, which is the integral of $L_{G,0}u$ over~$D$, which by the integration by parts formula equals
\begin{multline*}
-\sum_{i,j} \int_D \partial_{x_i}g^{ij}\partial_{x_j} u\, dx+\int_{\partial D} \langle G\nabla u, \nu\rangle \, d\sigma_{d-1}
\\
=\sum_{i,j} \int_D \partial_{x_j}\partial_{x_i}g^{ij}u\, dx +\int_{\partial D} \langle G\nabla u, \nu\rangle \, d\sigma_{d-1}.
\end{multline*}
Since $\nabla u(x)= \langle\nabla u(x), \nu(x)\rangle \nu(x)$ on $\partial D$, which follows
from the equality $u=0$ on~$\partial D$, we have
$$
\langle \nabla u,\nu\rangle \varrho =\langle\nabla u, \nu\rangle \varrho, \quad
\langle G\nabla u, \nu\rangle=\langle G\nu, \nu\rangle\langle\nabla u, \nu\rangle
\quad \hbox{on } \partial D.
$$
So again \eqref{int} holds and we can take $\eta=(\varrho -\langle G\nu, \nu\rangle)\, \sigma_{d-1}$ in \eqref{dir}.
Similar calculations for general smooth $A$ lead to $\eta=(\varrho -\kappa)\, \sigma_{d-1}$.

In the general case considered below, solutions to the double divergence form  elliptic
equation need not belong to the Sobolev class, which makes a notable distinction of such equations from direct
 elliptic equations and  divergence  form equations. For example, every bounded measurable function $\varrho$ on the real line such that
 the  function $1/\varrho$ is locally   bounded  satisfies
 the double divergence form  elliptic
equation  with $A(x)=1/\varrho(x)$ and $b=h=0$, $G=0$ (which becomes the Kolmogorov  equation).
 The continuity of $A$ ensures only the continuity of~$\varrho$,
but not more. In the multidimensional case the known example of Bauman \cite{Bau} shows that
the  continuity of $A$ does not imply even the local boundedness of~$\varrho$.
Note that if a solution is not continuous or Sobolev, then there is no natural way to restrict it to surfaces.
For surveys of the theory of  Fokker--Planck--Kolmogorov equations see \cite{BKRS}, \cite{BRS24}, and \cite{BSumn}.
In the recent years, there is a growth of interest in such equations with   diffusion matrices of low
regularity, see, for example, the last two surveys and
\cite{Lee1}, \cite{Lee2}, \cite{Lee3}, \cite{LeeST}, \cite{LeeT}, where one can find additional references, in particular,
related to applications to diffusion processes.
The  Dirichlet problem  for direct and divergence form elliptic  equations is well studied, see, for example, the books \cite{GT},
\cite{Krylov96} and the recent papers \cite{Krylov08}, \cite{Gu19}, \cite{Gu24}.

The main  results of this paper are as follows.

1) Under conditions {\rm (H1)}, {\rm (H2)}, {\rm (H3)} and the
additional assumption of the continuity of $G$ every nonnegative solution to equation
(\ref{eq1}) on $\Omega$ is a solution to the  Dirichlet problem on every bounded domain  $ D $ with smooth boundary contained
with closure in $\Omega$ with the boundary condition given by some measure $\eta$ on $\partial D $,
and this is also true for locally bounded signed solutions. Moreover, this result extends to the
nonnegative solutions to the equation only on~$D$ (not on all of $\Omega$) belonging to $L^{p'}(D)$.

2) The Dirichlet problem (\ref{dir}) has a unique solution, and for this solution  some a priori estimates are obtained.

3) For sufficiently ``nice'' smooth approximations of the functions $a^{ij}$, $g^{ij}$, $b^i$, $h^i$ and the measure $\eta$,
 smooth solutions to the  corresponding Dirichlet problems   converge to the  solution to the  Dirichlet  problem for the equation with
the original coefficients and the boundary condition with the original measure $\eta$.

Note that it was assertion 1)  that motivated us to study the   Dirichlet problem for double divergence form
equations with measures as boundary conditions. This problem was previously investigated in the
case of boundary conditions from $L^r(\partial D )$ for some $r\ge 1$ or a boundary  condition given by a continuous function.
The Dirichlet problem  for double divergence form elliptic equations is concerned in the papers
\cite{Sjorg1}, \cite{Sjorg2}, \cite{Esc}, \cite{DEK18}, \cite{DKK25}, and~\cite{K25}.
In  \cite{Sjorg1}, \cite{Sjorg2}, and~\cite{DKK25}, the Dirichlet  problem  for
weak solutions of class $C(\overline{ D })$ with boundary conditions given by continuous functions
on $\partial D $ was studied.  Weak solutions to the  Dirichlet  problem with boundary conditions
given by functions from $L^{q}(\partial D )$ for some $q\ge 1$ were considered in \cite{Esc}.
A~survey of recent results in this area is given in \cite{K25}. As far as we know, the Dirichlet  problem
 for double divergence form elliptic equations with boundary conditions given by
 measures on $\partial D $ has not yet been studied. Note that in the cited papers the matrix $A$
was assumed to be H\"older continuous or satisfying Dini's condition (classical or  in mean).
The Dirichlet problem  for divergence form linear and quasilinear elliptic equations with boundary conditions given by measures is
well studied  (see, for example, \cite{Veron2004} and \cite{MarcVeron13}).

In the study of the  Dirichlet  problem we are  particularly interested in the question
of smooth approximations of solutions, which plays a crucial role in the proof of the continuity of solutions to double divergence form elliptic equations.
In the special case $b=h=0$, it is stated  in \cite[Theorem 1.10]{DK17} that if the matrices $A$ and $G$ satisfy the Dini
mean oscillation condition  (see the definition in front of Theorem~\ref{th4}),
then every solution $\varrho$ to equation (\ref{eq1}), belonging to $L^2_{loc}(\Omega)$, has a continuous version.
The same assertion with the class $L^1_{loc}(\Omega)$ in place of $L^2_{loc}(\Omega)$ is formulated in Theorem~2.15
in the recent paper~\cite{K25}. However, in the proof of Theorem~1.10 in \cite{DK17} the
assumption about the local boundedness of the solution is substantially used, i.e., membership in the class $L^\infty_{loc}(\Omega)$
 (see the reasoning on p.~429 in~\cite{DK17}).
 On p.~422 in~\cite{DK17} a reference is given to the approximation procedure from \cite{D12}  for divergence form equations,
 but no details are given for the case of double divergence form equations considered in the cited theorem.
 Professor Hongjie Dong has kindly communicated these details.
  In~\cite{K25}, after Theorem~2.15,
 a~remark is made explaining that the condition of local boundedness can be omitted if one approximates the solution by smooth solutions
to analogous equations with smooth coefficients and this remark is justified only in the case, where
one considers not merely solutions to equation (\ref{eq1}), but solutions to the Dirichlet problem, which by virtue of uniqueness of solutions
enables one to assert that the approximations converge exactly to the original solution and not to some other solution.
However, it is not clear at all that an arbitrary solution $\varrho$ to equation (\ref{eq1})
on any open ball $B$ belonging with closure to the domain~$\Omega$ is in some sense
a solution to the  Dirichlet problem. Note that the justification of
Harnack's inequality for nonnegative solutions to equation (\ref{eq1}) with $b=0$, $G=0$ and $h=0$
given in~\cite{DEK18} also employs the cited Theorem~1.10 from~\cite{DK17}. In turn, generalizations of Harnack's inequality
to double divergence form elliptic equations of a  more general type obtained in
\cite{BRS23} and \cite{GKim} use Harnack's inequality from~\cite{DEK18}.
For convenience of further references we explain how to reduce the general case to the case of smooth solutions
of equations with smooth coefficients.
Thus, the aforementioned statements  about existence of continuous versions of any solutions and the validity of Harnack's inequality
 for nonnegative solutions are valid in their original formulations.

\section{Main results}

Let $B(a,R)$ denote the open ball of radius $R$ centered at $a$.

\begin{theorem}\label{th1}
Suppose that in addition to conditions {\rm (H1)}, {\rm (H2)}, {\rm (H3)}
the functions $g^{ij}$ are continuous on $\Omega$ and
a function $\varrho\in L^{p'}_{loc}(\Omega)$
is a solution to equation {\rm (\ref{eq1})}
on $\Omega$ such that $\varrho(x)\ge 0$ for almost all $x\in\Omega$.
Let $ D $ be a bounded domain with smooth boundary $\partial D $ of class $C^2$ and $\overline{ D }\subset\Omega$.
Then there exists a bounded Borel measure $\eta$ on the surface $\partial D $ such that the function $\varrho$ is a solution
to the  Dirichlet problem
{\rm (\ref{dir})} on $ D $ with boundary condition $\eta+\kappa\sigma_{d-1}$ on $\partial D $. If it is known in advance  that
$\varrho\in L^{\infty}_{loc}(\Omega)$, then the assumption of nonnegativity of $\varrho$ can be omitted, moreover,
the corresponding measure $\eta$ has a bounded density with respect to the surface measure $\sigma_{d-1}$ on $\partial D $.
\end{theorem}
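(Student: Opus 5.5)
The plan is to realize the boundary functional in \eqref{int} as a limit of volume integrals over thin layers near $\partial D$. We obtain these integrals by testing the equation on $\Omega$ with cut-off versions of a function vanishing on $\partial D$, and we then use positivity, or local boundedness, of $\varrho$ to extract a measure.

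First we set up the test functions. Fix a $C^2$ defining function: let $\delta(x)$ be the signed distance to $\partial D$, which is $C^2$ in a neighbourhood $U_\varepsilon=\{|\delta|<\varepsilon_0\}$, and set $D_\varepsilon=\{x\in D:\ {\rm dist}(x,\partial D)>\varepsilon\}$. Take $\psi_\varepsilon(x)=\zeta(\delta_D(x)/\varepsilon)$, where $\delta_D$ is the distance to $\partial D$ inside $D$ and $\zeta$ is smooth with $\zeta=0$ on $[0,1]$ and $\zeta=1$ on $[2,\infty)$. For $u\in W^{p,2}(D)\cap C^1(\overline D)$ with $u=0$ on $\partial D$, the function $u\psi_\varepsilon$ is compactly supported in $D$. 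After mollification it is an admissible test function for \eqref{eq1} on $\Omega$; since $\varrho\in L^{p'}_{loc}$, $b\in L^p_{loc}$, $g^{ij}\in L^{p'}$ and $W^{p,2}\subset C^1$, we may pass from $C_0^\infty$ to $W^{p,2}_0$-type functions with compact support. Expanding $L_{A,b}(u\psi_\varepsilon)=\psi_\varepsilon L_{A,b}u+2\langle A\nabla u,\nabla\psi_\varepsilon\rangle+uL_{A,b}\psi_\varepsilon$ gives
$$
\int_D\psi_\varepsilon L_{A,b}u\,\varrho\,dx-\int_D\psi_\varepsilon L_{G,h}u\,dx
=-\int_D\bigl[2\langle A\nabla u,\nabla\psi_\varepsilon\rangle+u\,L_{A,b}\psi_\varepsilon\bigr]\varrho\,dx+R_\varepsilon ,
$$
where $R_\varepsilon$ collects the analogous terms with $G,h$ and $\varrho$ replaced by $1$. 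The left side converges to the left side of \eqref{int} minus $\int_D L_{G,h}u\,dx$.

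Next we identify the limit of the layer terms. On the layer $\{\varepsilon<\delta_D<2\varepsilon\}$ we have $\nabla\psi_\varepsilon=\varepsilon^{-1}\zeta'(\cdot)(-\nu)$ up to $O(1)$, $|u|\le C\delta_D\le 2C\varepsilon$, and $D^2\psi_\varepsilon=\varepsilon^{-2}\zeta''\nu\otimes\nu+O(\varepsilon^{-1})$. Hence
$$
2\langle A\nabla u,\nabla\psi_\varepsilon\rangle+uL_{A,b}\psi_\varepsilon
=-\varepsilon^{-1}\Bigl[2\zeta'(\tfrac{\delta_D}{\varepsilon})+\tfrac{\delta_D}{\varepsilon}\zeta''(\tfrac{\delta_D}{\varepsilon})\Bigr]\langle A\nu,\nu\rangle\,\partial_\nu u\,(1+o(1))+O(|b|)\cdot O(1).
$$
Here we use $u\approx-\delta_D\,\partial_\nu u$ and $\nabla u\approx\partial_\nu u\,\nu$ near the boundary, by the $C^1(\overline D)$ assumption. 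The $b$-term goes to zero because $\int_{\rm layer}|b|\varrho\to0$. Note the weight $k(s)=2\zeta'(s)+s\zeta''(s)=(s^2\zeta')'/s$, which satisfies $\int_1^2 k\,ds$-type normalizations. With $G$ continuous, the same computation applied to $R_\varepsilon$ converges to $\int_{\partial D}\langle G\nu,\nu\rangle\partial_\nu u\,d\sigma_{d-1}$; the $h$-part vanishes since $|u|\,|\nabla\psi_\varepsilon|\le C$ and $h\in L^1$ on a shrinking set. Thus everything reduces to the convergence of the measures
$$
\mu_\varepsilon(dx)=\varepsilon^{-1}\bigl|k(\delta_D/\varepsilon)\bigr|\varrho\,dx\ \text{pushed to }\partial D\text{ by the nearest-point projection.}
$$

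The main obstacle is the uniform boundedness of $\mu_\varepsilon$, together with the sign of $k$. To handle the sign, I would first choose $\zeta$ so that $s^2\zeta'(s)$ is increasing on $[1,s_0]$ and decreasing after. Rather than this, I would test with a specially chosen $u_0$ solving $L_{A,b}u_0=-1$ in $D$ with $u_0=0$ on $\partial D$; this $u_0$ lies in $W^{p,2}\cap C^1(\overline D)$ by $L^p$ theory with continuous $A$ and $b\in L^p$, $p>d$, and the Hopf lemma gives $\partial_\nu u_0\le -c<0$. Plugging $u_0$ into the identity, and using that the left side is bounded, shows that $\int \varepsilon^{-1}k(\delta_D/\varepsilon)\langle A\nu,\nu\rangle\varrho\,dx$ stays bounded. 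To turn this into a bound on $\int\varepsilon^{-1}|k|\varrho$, I would use a nonnegative kernel: replace $\zeta$ by profiles for which the combination is one-signed. Alternatively, I would average over $\varepsilon\in[\varepsilon_1,2\varepsilon_1]$, since integrating $k(\delta/\varepsilon)/\varepsilon$ in $d\varepsilon/\varepsilon$ produces a nonnegative kernel $\asymp\varepsilon_1^{-1}1_{\{\delta\sim\varepsilon_1\}}$. Nonnegativity of $\varrho$ then gives $\sup_\varepsilon\mu_\varepsilon(\partial D)<\infty$. By weak compactness on the compact set $\partial D$, a subsequence converges weakly to a bounded measure $\tilde\eta$. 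Then \eqref{int} holds with $\eta=(\langle A\nu,\nu\rangle\tilde\eta)-\kappa\langle A\nu,\nu\rangle\sigma_{d-1}$, suitably rewritten, for all $u$. The limit functional does not depend on the subsequence, because it equals the left side. For $\varrho\in L^\infty_{loc}$ no sign is needed: $\mu_\varepsilon$ has density bounded by $C\|\varrho\|_{L^\infty(U)}$ with respect to $\sigma_{d-1}$, so the weak-$*$ limit in $L^\infty(\partial D)$ gives a bounded density.
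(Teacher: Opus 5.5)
There is a genuine gap in the main case. Your layer approach is a different route from the paper, which mollifies the equation and integrates by parts on $D$ to get true boundary integrals. Your $L^\infty_{loc}$ case does go through as you describe, because there the mass of $|\varrho|$ in the layer $\{\varepsilon<\delta_D<2\varepsilon\}$ is trivially $O(\varepsilon)$. The problem is the case $\varrho\ge 0$, $\varrho\in L^{p'}_{loc}$: the step you flag as the main obstacle is not resolved, and none of your remedies can resolve it.

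\textbf{Why your kernel cannot be one-signed.} Your kernel is $k(s)=2\zeta'(s)+s\zeta''(s)=(s\zeta(s))''$. Hence $s\,k(s)=(s^2\zeta'(s))'$, so $\int_1^2 s\,k(s)\,ds=[s^2\zeta'(s)]_1^2=0$, while $\int_1^2 k(s)\,ds=1$. So $k$ changes sign for every cut-off profile $\zeta$; equivalently, $s\zeta(s)$, which is $0$ on $[0,1]$ and $s$ on $[2,\infty)$, is never convex.

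\textbf{Why averaging and the $u_0$ test do not help.} For every $\varepsilon$ you have $\int_0^\infty t\,\varepsilon^{-1}k(t/\varepsilon)\,dt=0$, and a vanishing first moment survives any superposition in $\varepsilon$. Concretely, your $d\varepsilon/\varepsilon$-average over $[\varepsilon_1,2\varepsilon_1]$ equals $t^{-1}[F(t/\varepsilon_1)-F(t/(2\varepsilon_1))]$ with $F=(s\zeta)'$. Since $\int_1^2F\,ds=2$, $F$ exceeds $1$ somewhere in $(1,2)$, and the average is negative there. Testing with $u_0$ and invoking Hopf only bounds the same signed functional.

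\textbf{Consequence.} You never obtain $\sup_\varepsilon \varepsilon^{-1}\int_{\{\varepsilon<\delta_D<2\varepsilon\}}\varrho\,dx<\infty$. Without it you have no uniform total-variation bound on $\mu_\varepsilon$, so no compactness. You also lose control of the errors you write as $(1+o(1))$. Replacing $\nabla u$ by $\partial_\nu u\,\nu$ and $u$ by $-\delta_D\partial_\nu u$ costs $o(\varepsilon^{-1})\varrho$ on the layer, which is $o(1)$ only if this layer bound holds.

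\textbf{The missing idea.} Get the layer bound from a test function whose profile in $\delta_D$ is convex, not from a product $u\psi_\varepsilon$.
\begin{itemize}
\item Take $\Phi\in C^2(\overline D)$ with $\Phi>0$ in $D$ and $\Phi=\delta_D$ near $\partial D$.
\item Take a smooth $\Xi$ on $[0,\infty)$ with $\Xi=0$ on $[0,1/2]$, $\Xi''\ge 0$ supported in $[1/2,3]$, positive on $[1,2]$, and $\int\Xi''=1$.
\item Set $v_\varepsilon=\varepsilon\,\Xi(\Phi/\varepsilon)$, a $C^2$ function with compact support in $D$, for which $L_{A,b}v_\varepsilon=\varepsilon^{-1}\Xi''(\Phi/\varepsilon)\langle A\nabla\Phi,\nabla\Phi\rangle+\Xi'(\Phi/\varepsilon)L_{A,b}\Phi$.
\end{itemize}
For small $\varepsilon$ the first term is at least $\theta\varepsilon^{-1}\Xi''(\delta_D/\varepsilon)\ge 0$. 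The second term is $O(1+|b|)$, since $0\le\Xi'\le 1$. Moreover $\int L_{G,h}v_\varepsilon\,dx$ is bounded because $G$ is bounded near $\partial D$. With $\varrho\ge 0$ this gives the uniform layer bound. After that, your compactness and identification steps go through; the limit measure may be signed, which the theorem allows.

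This is the layer counterpart of the paper's argument. There, the mollified equation is integrated by parts on $D$, producing $\int_{\partial D}\langle (A\varrho)_\varepsilon\nabla u,\nu\rangle\,d\sigma_{d-1}$. The mass bound $\int_{\partial D}\varrho_\varepsilon\,d\sigma_{d-1}\le C$ comes from testing with $\varphi$ ($\varphi=0$, $\nabla\varphi=\nu$ on $\partial D$) together with $\langle (A\varrho)_\varepsilon\nu,\nu\rangle\ge\theta\varrho_\varepsilon$. The commutator estimate $\|(A\varrho)_\varepsilon-A\varrho_\varepsilon\|\le\omega(\varepsilon)\varrho_\varepsilon$ then plays the role of your error control.
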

\begin{proof}
Let $\delta>0$. By $ D _{\delta}$ we denote the $\delta$-neighborhood of the set $ D $.
For sufficiently small $\delta$ the inclusion $ D _{4\delta}\subset\Omega$ holds.
Let $\psi\in C_0^{\infty}(\mathbb{R}^d)$ be such that $\psi\ge 0$, $\|\psi\|_{L^1(\mathbb{R}^d)}=1$
and $\psi$ vanishes outside the ball $B(0, 1)$. If $0<\varepsilon<\delta$, we set
$$
\psi_{\varepsilon}(x)=\varepsilon^{-d}\psi(\varepsilon^{-1}x).
$$
 For an integrable function $f$ we denote by $f_{\varepsilon}$ the  convolution $f*\psi_{\varepsilon}$:
$$
f_{\varepsilon}(x)=\int_{\mathbb{R}^d} f(x-y)\psi_{\varepsilon}(y)\, dy
=\int_{\mathbb{R}^d} f(y)\psi_{\varepsilon}(x-y)\, dy.
$$
Extend the functions $a^{ij}, g^{ij}, b^i, h^i$ by zero outside of~$\Omega$.
Let $x\in  D _{2\delta}$. Substituting in the integral equality defining the solution
the function $u(y)=\psi_{\varepsilon}(x-y)$, we obtain the equality
$$
\sum_{i, j=1}^d\partial_{x_i}\partial_{x_j}(a^{ij}\varrho)_{\varepsilon}-
\sum_{i=1}^d\partial_{x_i}(b^{i}\varrho)_{\varepsilon}
=
\sum_{i, j=1}^d\partial_{x_i}\partial_{x_j}(g^{ij})_{\varepsilon}-
\sum_{i=1}^d\partial_{x_i}(h^{i})_{\varepsilon}.
$$
Multiplying this equality by a function $u\in W^{2, p}( D )\cap C^1(\overline{ D })$
vanishing on $\partial D $, with the  aid of integration by parts we obtain
\begin{multline}\label{apr}
\int_{ D }\Bigl[{\rm trace}\bigl((A\varrho)_{\varepsilon}(x)D^2u(x)\bigr)+\langle (b\varrho)_{\varepsilon}(x), \nabla u(x)\rangle\Bigr]\,dx
\\
=\int_{ D }\Bigl[{\rm trace}\bigl(G_{\varepsilon}(x)D^2u(x)\bigr)+\langle h_{\varepsilon}(x), \nabla u(x)\rangle\Bigr]\,dx
\\
+\int_{\partial D }\langle(A\varrho)_{\varepsilon}(x)\nabla u(x), \nu(x)\rangle\,\sigma_{d-1}(dx)-
\int_{\partial D }\langle G_{\varepsilon}(x)\nabla u(x), \nu(x)\rangle\,\sigma_{d-1}(dx),
\end{multline}
where $\sigma_{d-1}$ is the standard surface  measure on $\partial D $.
According to \cite[Lemma 1.3.5]{MarcVeron13}, there exists a function $\varphi\in C^2(\overline{ D })$ such
that $\varphi(x)=0$ and $\langle\nabla\varphi(x), \nu(x)\rangle=1$ if $x\in\partial D $.
Since the function $\varphi$ is constant on the boundary $\partial D $, its gradient there is proportional to the normal.
Therefore, the equality $\langle\nabla\varphi(x), \nu(x)\rangle=1$
for $x\in\partial D $ implies the equality $\nabla\varphi(x)=\nu(x)$.
Substituting into equality (\ref{apr}) the function $\varphi$ in place of~$u$, we obtain
\begin{multline*}
\int_{\partial D }\langle(A\varrho)_{\varepsilon}(x)\nu(x), \nu(x)\rangle\,\sigma_{d-1}(dx)
\\
=
\int_{ D }\Bigl[{\rm trace}\bigl((A\varrho)_{\varepsilon}(x)D^2\varphi(x)\bigr)
+\langle(b\varrho_{\varepsilon})(x), \nabla\varphi(x)\rangle\Bigr]\,dx
\\
-\int_{ D }\bigl[{\rm trace}\bigl(G_{\varepsilon}(x)D^2\varphi(x)\bigr)+\langle h_{\varepsilon}(x), \nabla\varphi(x)\rangle\bigr]\,dx
+\int_{\partial D }\langle G_{\varepsilon}(x)\nu(x), \nu(x)\rangle\,\sigma_{d-1}(dx).
\end{multline*}
Taking into account that $\varrho$ is nonnegative,  we have
$$
\langle(A\varrho)_{\varepsilon}(x)\nu(x), \nu(x)\rangle=
\int_{\mathbb{R}^d}  \langle A(y)\nu(x), \nu(x)\rangle\varrho(y)\psi_{\varepsilon}(x-y)\,dy\ge
\theta\varrho_{\varepsilon}(x).
$$
Therefore, the estimate
$$
\int_{\partial D }\varrho_{\varepsilon}(x)\,\sigma_{d-1}(dx)\le C
$$
holds, where the constant
\begin{multline*}
C=\theta^{-1}\|\varphi\|_{C^2(\overline{ D })}\int_{ D _{3\delta}}
\Bigl[d\|A(x)\|+|b(x)|\Bigr]\varrho(x)\,dx
\\
+\theta^{-1}\|\varphi\|_{C^2(\overline{ D })}\int_{ D _{3\delta}}
\Bigl[d\|G(x)\|+|h(x)|\Bigr]\,dx
+\theta^{-1}\sigma_{d-1}(\partial D )\sup_{ D _{3\delta}}\|G(x)\|
\end{multline*}
does not depend on $\varepsilon$. By virtue of this estimate and compactness of $\partial D $, the hypothesis of Prohorov's theorem
is fulfilled (see \cite[Theorem~8.6.2]{B07}), therefore, there exists a sequence $\varepsilon_j\to 0$ such that the
sequence of nonnegative measures $\varrho_{\varepsilon_j}\,\sigma_{d-1}$
converges weakly  to some nonnegative measure $\widetilde{\eta}$ on $\partial D $. Observe that
$$
\|(A\varrho)_{\varepsilon}(x)-A(x)\varrho_{\varepsilon}(x)\|
\le \omega(\varepsilon)\varrho_{\varepsilon}(x).
$$
Therefore, for every function $u\in W^{p,2}( D )\cap C^1(\overline{ D })$ vanishing on $\partial D $ the expression
$$
\biggl|\int_{\partial D } \big\langle\bigl((A\varrho)_{\varepsilon}(x)-A(x)\varrho_{\varepsilon}\bigr)
\nabla u(x), \nu(x)\big\rangle\,\sigma_{d-1}(dx)\biggr|
$$
is estimated from above by the quantity
$$
C\omega(\varepsilon)\max_{\partial D }|\nabla u|
$$
and tends to zero as $\varepsilon\to 0$.
Moreover, the following  equality is true:
$$
\lim_{j\to\infty}\int_{\partial D }\langle A(x)\nabla u(x), \nu(x)\rangle\varrho_{\varepsilon_j}(x)\, \sigma_{d-1}(dx)=
\int_{\partial D }\langle A(x)\nabla u(x), \nu(x)\rangle\,\widetilde{\eta}(dx).
$$
Observe that as $\varepsilon\to 0$ one has convergence
$(\varrho A)_{\varepsilon}\to \varrho A$ in $L^{p'}(D)$ and $(b\varrho)_{\varepsilon}\to b\varrho$ in $L^1(D)$.
Replacing $\varepsilon$ by $\varepsilon_j$ in (\ref{apr}) and taking into account the  remarks made above,
we pass to the limit as $j\to\infty$ in (\ref{apr}) and obtain the equality
\begin{multline*}
\int_{ D }L_{A, b}u(x)\varrho(x)\,dx
-\int_{ D }L_{G, h}u(x)\,dx
\\
=\int_{\partial D }\langle A(x)\nabla u(x), \nu(x)\rangle\,\widetilde{\eta}(dx)-
\int_{\partial D }\langle G(x)\nabla u(x), \nu(x)\rangle\,\sigma_{d-1}(dx),
\end{multline*}
Since $u=0$ on $\partial D $, we have $\nabla u(x)=\langle\nabla u(x), \nu(x)\rangle\nu(x)$ if $x\in\partial D $.
Hence
\begin{multline*}
\int_{\partial D }\langle A(x)\nabla u(x), \nu(x)\rangle\,\widetilde{\eta}(dx)-
\int_{\partial D }\langle G(x)\nabla u(x), \nu(x)\rangle\,\sigma_{d-1}(dx)
\\
=
\int_{\partial D }\langle A(x)\nabla u(x), \nu(x)\rangle\,\eta(dx),
\end{multline*}
where $\eta=\widetilde{\eta}-\kappa\sigma_{d-1}$ and the density $\kappa$ is given by formula (\ref{dens}).
Thus, we arrive at the equality
$$
\int_{ D }L_{A, b}u(x)\varrho(x)\,dx
-\int_{ D }L_{G, h}u(x)\,dx
=\int_{\partial D }\langle A(x)\nabla u(x), \nu(x)\rangle\,\eta(dx).
$$
Suppose  now that the function $\varrho$ is locally bounded and can be sign variable.
Then for all $x\in\partial D $ the estimate
$|\varrho_{\varepsilon}(x)|\le \|\varrho\|_{L^{\infty}( D _{3\delta})}$ holds.
Therefore, one can pick a sequence $\varepsilon_j\to 0$ such that
the sequence of functions $\varrho_{\varepsilon_j}$ converges weakly in $L^2(\partial D )$, where  $\partial D $
is equipped with the surface measure, to some bounded Borel function on $\partial D $. The rest of the justification
does not differ from the proof in the case of a nonnegative solution~$\varrho$.
\end{proof}

The assumption $\varrho\ge 0$ was used to obtain the uniform boundedness of measures
$\varrho_\varepsilon\, \sigma_{d-1}$ on~$\partial D$.

We do not know whether the  measure $\eta$ constructed in Theorem \ref{th1} is always
absolutely continuous with respect to the surface measure $\sigma_{d-1}$ on $\partial D $.
However, in a somewhat more special situation one can show that in a certain sense
almost all such measures are absolutely continuous. Suppose that a ball $B(x_0, R_1)$ is contained with closure in $\Omega$
and a function $\varrho\in L^{p'}_{loc}(\Omega)$ is a nonnegative solution to equation \eqref{eq1} on~$\Omega$.
Let us fix a Borel version of the function $\varrho$ on~$\Omega$.

\begin{theorem}
For almost all $R\in(0, R_1)$ the function $\varrho$ is a solution to the  Dirichlet  problem \eqref{dir}
on $B(x_0, R)$ with boundary condition $\eta+\kappa\sigma_{d-1}$, which has
density $\varrho$ with respect to  the standard surface measure $\sigma_{d-1}$ on $\partial B(x_0, R)$.
\end{theorem}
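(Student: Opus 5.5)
We interpret the statement, as the discussion after Theorem~\ref{th1} indicates, to mean that $\varrho\,\sigma_{d-1}=\eta+\kappa\sigma_{d-1}$ on $\partial B(x_0,R)$, i.e.\ $\eta=(\varrho-\kappa)\sigma_{d-1}$. We also use that $g^{ij}$ are continuous, as in Theorem~\ref{th1}, so that $\kappa$ is meaningful. The plan is to repeat the proof of Theorem~\ref{th1} with $D=B(x_0,R)$, but to replace the boundary integrals over the single sphere by averages over thin spherical shells. The point is that the Borel version of $\varrho$ restricted to almost every sphere is then a genuine integrable function, and this restriction will serve as the limit measure.

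First, in polar coordinates $\int_{B(x_0,R_1)}(1+\|A\|+|b|)\varrho\,dx=\int_0^{R_1}\int_{\partial B(x_0,r)}(\dots)\,d\sigma_{d-1}\,dr<\infty$. Hence for almost every $r$ the function $\varrho$ (and $\varrho A$, $\varrho b$) is $\sigma_{d-1}$-integrable on $\partial B(x_0,r)$. Moreover $r\mapsto \Phi(r):=\int_{\partial B(x_0,r)} F\varrho\,d\sigma_{d-1}$ is in $L^1(0,R_1)$ for each bounded continuous $F$. By the Lebesgue differentiation theorem, almost every $R$ is a Lebesgue point of $\Phi$. To handle all test functions simultaneously, I will apply this to a countable dense family of $F$'s: products of the entries of $A$, which are continuous, with functions of $(x-x_0)/|x-x_0|$ from a countable dense subset of $C(S^{d-1})$. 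I will also require that $R$ be a Lebesgue point of $r\mapsto\int_{\partial B(x_0,r)}\varrho\,d\sigma_{d-1}$, which gives uniform control of the mass.

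Next, for a test function $u\in W^{p,2}(B_R)\cap C^1(\overline{B_R})$ vanishing on the sphere, I test the equation on $\Omega$ with the Lipschitz cutoff $\zeta_\varepsilon(x)=\min\{1,(R-|x-x_0|)_+/\varepsilon\}$ times an extension of $u$. More precisely, I take the test function $u\zeta_\varepsilon$, after mollifying to justify it: the equation extends by density from $C_0^\infty$ to $W^{p,2}$ functions with compact support, since $\varrho A\in L^{p'}_{loc}$ and $\varrho b\in L^1$ while $\nabla u$ is bounded. Expanding $L_{A,b}(u\zeta_\varepsilon)$, the terms $\zeta_\varepsilon L_{A,b}u$ converge to the integral over $B_R$. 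The cross terms involve $\nabla\zeta_\varepsilon=-\varepsilon^{-1}\nu\,1_{R-\varepsilon<|x-x_0|<R}$ and $D^2\zeta_\varepsilon$. The latter is a measure on the two spheres $|x-x_0|=R$ and $|x-x_0|=R-\varepsilon$, plus a bounded term of size $\varepsilon^{-1}$ on the shell. This second-order part is the technical snag. To avoid it, I will use a $C^{1,1}$ cutoff in the radial variable instead, or simply write $u\zeta_\varepsilon$ and integrate by parts only in the shell. Since $u=0$ on the sphere, $|u|\le C\varepsilon$ in the shell, so the terms $u\,D^2\zeta_\varepsilon$ contribute averages like $\varepsilon^{-1}\int_{\rm shell}\langle A\nabla u,\nu\rangle\varrho$ with a bounded factor. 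The surviving term is $2\varepsilon^{-1}\int_{\rm shell}\langle A\nabla u,\nu\rangle\varrho\,dx$ minus the $u D^2\zeta$ contributions. These combine, via $u\approx -(R-|x|)\langle\nabla u,\nu\rangle$ near the sphere, into $\varepsilon^{-1}\int_{R-\varepsilon}^{R}\Phi_u(r)\,dr$ with $\Phi_u(r)=\int_{\partial B_r}\langle A\nabla u,\nu\rangle\varrho\,d\sigma_{d-1}$. The analogous computation on the right-hand side gives $\int_{\partial B_R}\langle G\nabla u,\nu\rangle d\sigma_{d-1}$, by the continuity of $G$.

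Finally, because $R$ is a Lebesgue point, $\varepsilon^{-1}\int_{R-\varepsilon}^R\Phi_u(r)\,dr\to\Phi_u(R)=\int_{\partial B_R}\langle A\nabla u,\nu\rangle\varrho\,d\sigma_{d-1}$. For this I approximate $\langle A\nabla u,\nu\rangle$ uniformly by members of the countable family, and control the error by the mass bound. This yields \eqref{int} with $\eta=(\varrho-\kappa)\sigma_{d-1}$, using $\nabla u=\langle\nabla u,\nu\rangle\nu$ on the sphere as in Theorem~\ref{th1}.

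The main obstacle is the bookkeeping in the shell. One must show that the second-derivative terms of the cutoff, combined with $|u|=O(\varepsilon)$, really produce the normal-derivative boundary average with the correct constant. One must also check that error terms like $\varepsilon^{-1}\int_{\rm shell}|u-u_{\rm lin}|\,\varrho$ vanish. Here $u\in C^1$ gives $|u+(R-|x|)\langle\nabla u,\nu\rangle|=o(\varepsilon)$ uniformly, and the Lebesgue-point mass bound finishes this.
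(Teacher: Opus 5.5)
Your proof is correct in substance, but it takes a different route from the paper. The paper never tests the equation near the sphere. It takes the boundary measures from Theorem~\ref{th1}, namely weak limits of the mollified traces $\varrho_\varepsilon\,\sigma_{d-1}$ on $\partial B(x_0,R)$. It shows that for each fixed $R$ the limit $\widetilde{\eta}_R$ does not depend on the subsequence: two boundary measures for the same solution coincide, as one sees by testing with functions vanishing on the sphere whose gradient there is $h\nu$ for arbitrary smooth $h$. It then identifies $\widetilde{\eta}_R=\varrho\,\sigma_{d-1}$ for a.e.\ $R$ by integrating $\int_{\partial B(x_0,r)}f\varrho_\varepsilon\,d\sigma_{d-1}$ over $r\in(0,R)$, letting $\varepsilon\to0$, and differentiating in $R$ for a countable dense family of $f$. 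You instead obtain \eqref{int} with $\eta=(\varrho-\kappa)\sigma_{d-1}$ directly from the weak formulation on $\Omega$. Your exceptional set of radii is the complement of the common Lebesgue points of countably many radial functions.

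One step needs fixing. Drop the Lipschitz cutoff: it makes $u\zeta_\varepsilon\notin W^{p,2}$, and it leaves the value of $\Phi_u$ on the single sphere $|x-x_0|=R-\varepsilon$, which Lebesgue points do not control. Commit instead to $\zeta_\varepsilon=\chi((R-|x-x_0|)/\varepsilon)$ with $\chi\in C^{1,1}$, $\chi(0)=0$ and $\chi=1$ on $[1,\infty)$. Then $2\langle A\nabla u,\nabla\zeta_\varepsilon\rangle+u\,{\rm trace}(AD^2\zeta_\varepsilon)$ contributes $-\int_0^1 w(\tau)\,\Phi_u(R-\varepsilon\tau)\,d\tau$ with $w=2\chi'+\tau\chi''$, up to $o(1)\,\varepsilon^{-1}\int_{R-\varepsilon<|x-x_0|<R}\varrho\,dx$. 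This is a weighted average, not the flat average $\varepsilon^{-1}\int_{R-\varepsilon}^R\Phi_u\,dr$ you wrote. Since $w$ is bounded and $\int_0^1 w\,d\tau=2+\chi'(1)-1=1$, your Lebesgue-point argument still applies. The same computation with continuous $G$ gives $-\int_{\partial B(x_0,R)}\langle G\nabla u,\nu\rangle\,d\sigma_{d-1}$.

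The paper's proof recycles Theorem~\ref{th1}. As a by-product it shows that for every $R$, not just almost every $R$, the mollified traces converge to a canonical boundary measure. But it inherits the hypothesis $\varrho\ge0$, which is exactly what bounds $\varrho_\varepsilon\sigma_{d-1}$ uniformly. Your proof is self-contained: no mollification of $\varrho$, no auxiliary function with prescribed normal derivative, and no uniqueness step. It also never uses the sign of $\varrho$. With $|\varrho|$ in the mass bound, it shows that every signed solution in $L^{p'}_{loc}(\Omega)$ solves the Dirichlet problem on almost every sphere with boundary density $\varrho-\kappa$. Applying the same Lebesgue-point device to the $G$-terms would also remove the need for continuity of $G$.
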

\begin{proof}
As in Theorem \ref{th1}, we denote by $\varrho_{\varepsilon}$ the convolution $\varrho*\psi_{\varepsilon}$.
Let $0<R<R_1$. It is seen from the proof of Theorem \ref{th1} that
the nonnegative measures $\varrho_{\varepsilon}\,\sigma_{d-1}$
on $\partial B(x_0, R)$ are  bounded in variation uniformly
with respect to $\varepsilon$ and $R\in(0, R_1)$, so there exists
 a sequence $\{\varepsilon_j\}$ converging to zero such that the measures $\varrho_{\varepsilon_j}\,\sigma_{d-1}$
 converge weakly to some nonnegative measure $\widetilde{\eta}$. Moreover,
the function $\varrho$ is a solution to the  Dirichlet problem \eqref{dir}
with boundary condition $\eta+\kappa\sigma_{d-1}$, where $\eta=\widetilde{\eta}-\kappa\sigma_{d-1}$.
We prove that the measure $\widetilde{\eta}$ does not depend on our choice of the sequence $\{\varepsilon_j\}$.
Suppose that for some other sequence $\{\varepsilon_j'\}$ we obtained a measure $\widetilde{\xi}$.
Set $\xi=\widetilde{\xi}-\kappa\sigma_{d-1}$.
Since the function $\varrho$ is a solution to the  Dirichlet  problem with boundary condition $\xi+\kappa\sigma_{d-1}$,
for every function $u\in W^{p, 2}(B(x_0, R))\cap C^1(\overline{B}(x_0, R))$ vanishing on $\partial B(x_0, R)$ we have the equality
$$
\int_{\partial B(x_0, R)}\langle A(x)\nabla u(x), \nu(x)\rangle\, (\eta-\xi)(dx)=0.
$$
Let $h$ be an arbitrary smooth function on $\mathbb{R}^d$.
According to \cite[Lemma 1.3.5]{MarcVeron13},
there exists a function $\varphi\in C^2(\overline{B}(x_0, R))$ such
that $\varphi(x)=0$ and $\langle\nabla\varphi(x), \nu(x)\rangle=h$
if $x\in\partial B(x_0, R)$.
Since the function $\varphi$ is constant on the boundary $\partial B(x_0, R)$, we have
$\nabla\varphi(x)=h(x)\nu(x)$ for $x\in\partial B(x_0, R)$. Therefore,
$$
\int_{\partial B(x_0, R)}h(x)\langle A(x)\nu(x), \nu(x)\rangle\, (\eta-\xi)(dx)=0.
$$
Using that the function $h$ is arbitrary and the matrix $A$ is nondegenerate, we conclude that $\eta=\xi$.
Thus, the measure $\widetilde{\eta}$ does not depend on our choice of the sequence $\{\varepsilon_j\}$.
Therefore, the measures $\varrho_{\varepsilon}\sigma_{d-1}$ converge weakly to $\widetilde{\eta}$ as $\varepsilon\to 0$.
For every $R\in (0,R_1)$ we denote by $\widetilde{\eta}_R$ the limiting measure for $\varrho_{\varepsilon}\sigma_{d-1}$
on $\partial B(x_0, R)$. Let $f\in C(\overline{B}(x_0, R_1))$ and $0<R<R_1$.
Observe that
$$
\lim_{\varepsilon\to 0}\int_{B(x_0, R)}f(x)\varrho_{\varepsilon}(x)\,dx=\int_{B(x_0, R)}f(x)\varrho(x)\,dx.
$$
Since
$$
\int_{B(x_0, R)}f(x)\varrho_{\varepsilon}(x)\,dx=
\int_0^R \int_{\partial B(x_0, r)}f(x)\varrho_{\varepsilon}(x)\, \sigma_{d-1}(dx)\,dr,
$$
the following  equality is valid:
$$
\lim_{\varepsilon\to 0}\int_{B(x_0, R)}f(x)\varrho_{\varepsilon}(x)\,dx=
\int_0^{R}\int_{\partial B(x_0, r)}f(x)\, \widetilde{\eta}_r(dx)\,dr.
$$
Therefore, for all $R\in (0,R_1)$ we have
$$
\int_0^R \int_{\partial B(x_0, r)}f(x)\varrho(x)\, \sigma_{d-1}(dx)\,dr
=\int_0^{R} \int_{\partial B(x_0, r)}f(x)\, \widetilde{\eta}_r(dx)\,dr.
$$
This implies that for almost all $R$ in the interval $(0, R_1)$ we have
\begin{equation}\label{remeq}
\int_{\partial B(x_0, R)}f(x)\varrho(x)\, \sigma_{d-1}(dx)=
\int_{\partial B(x_0, R)}f(x)\, \widetilde{\eta}_R(dx).
\end{equation}
Let $\mathcal{F}$ be a  countable collection of continuous functions on $\overline{B}(x_0, R_1)$ such
that for every $R\in (0,R_1)$ the set of restrictions of functions from $\mathcal{F}$ to $\partial B(x_0, R)$
is everywhere dense in $C(\partial B(x_0, R))$. For such a collection one can take
all polynomials on $\mathbb{R}^d$ with rational coefficients. We can assume that
for almost all $R$ from the interval $(0, R_1)$ equality (\ref{remeq})
holds true for all functions $f$ from $\mathcal{F}$. Then for almost all $R\in(0, R_1)$
the equality $\widetilde{\eta}_R=\varrho\, \sigma_{d-1}$ is true.
\end{proof}

Our next theorem is an analog of the Herglotz--Doob theorem for
superharmonic functions (see \cite[Theorem 1.4.1]{MarcVeron13}).
Its distinction from Theorem~\ref{th1} is that the boundary value on $\partial D $ arises for the solutions from $L^{p'}( D )$
to the equation only on~$D$, but not on the whole domain~$\Omega$.

\begin{theorem}\label{th11}
Suppose that in addition to conditions {\rm (H1)}, {\rm (H2)}, {\rm (H3)}
the functions $g^{ij}$ are continuous on the domain $\Omega$, $ D $ is a bounded
domain with smooth boundary  $\partial D $ of class $C^2$ and $\overline{ D }\subset\Omega$.
Assume also that $\varrho\in L^{p'}( D )$ is a solution to equation {\rm (\ref{eq1})}
on $ D $ {\rm (}not on all of $\Omega${\rm )} and $\varrho(x)\ge 0$ for almost all $x\in D $.
Then there exists a bounded Borel measure $\eta$ on~$\partial D $ such that the function $\varrho$ is a solution to the Dirichlet
problem  {\rm (\ref{dir})} on $ D $ with boundary condition $\eta+\kappa\sigma_{d-1}$ on $\partial D $.
If it is given in advance that $\varrho\in L^{\infty}( D )$, then the assumption of nonnegativity of $\varrho$ can be omitted.
\end{theorem}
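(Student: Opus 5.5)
The plan is to reduce to Theorem~\ref{th1} by working on interior approximating domains. Mollification is not available up to $\partial D$, because $\varrho$ solves the equation only in $D$. So instead of smoothing $\varrho$, we shrink the domain.

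Since $\partial D$ is of class $C^2$, for small $t>0$ the sets $D^t=\{x\in D:\ {\rm dist}(x,\partial D)>t\}$ are domains with $C^2$ boundary and $\overline{D^t}\subset D$. Moreover, the map $\pi_t\colon\partial D\to\partial D^t$, $x\mapsto x-t\nu(x)$, is a $C^1$ diffeomorphism whose Jacobian tends to $1$ uniformly. For $u\in W^{p,2}(D)\cap C^1(\overline D)$ with $u|_{\partial D}=0$, we would like to test against $u$ on $D^t$. However, $u$ does not vanish on $\partial D^t$, so it cannot be used directly in the identity of Theorem~\ref{th1}.

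We therefore argue as in the proof of Theorem~\ref{th1}, with $D$ replaced by $D^t$ and $\Omega$ replaced by $D$, keeping the boundary terms for general $u$. Applying \eqref{apr} on $D^t$ with mollification parameter $\varepsilon<t/4$ and integrating by parts for $u\in C^2$ without the vanishing condition, we get
\begin{multline*}
\int_{D^t}\Bigl[{\rm trace}\bigl(((A\varrho)_\varepsilon-G_\varepsilon)D^2u\bigr)+\langle (b\varrho)_\varepsilon-h_\varepsilon,\nabla u\rangle\Bigr]dx
\\
=\int_{\partial D^t}\langle((A\varrho)_\varepsilon-G_\varepsilon)\nabla u,\nu_t\rangle\,d\sigma
-\int_{\partial D^t}u\,\langle {\rm div}((A\varrho)_\varepsilon-G_\varepsilon)-(b\varrho)_\varepsilon+h_\varepsilon,\nu_t\rangle\,d\sigma .
\end{multline*}
The second boundary term involves first derivatives of $\varrho_\varepsilon$ and is uncontrolled. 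To avoid it we use the averaging trick from the second theorem above. We multiply by a cutoff in the normal variable and integrate in $t$ over $(s,2s)$, or equivalently test with $u\zeta_s$, where $\zeta_s(x)=\zeta({\rm dist}(x,\partial D)/s)$ and $\zeta$ vanishes near $0$ and equals $1$ on $[2,\infty)$.

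Concretely, $u\zeta_s\in C_0^2(D)$ is admissible in the equation on $D$, which gives
$$\int_D L_{A,b}(u\zeta_s)\varrho\,dx=\int_D L_{G,h}(u\zeta_s)\,dx.$$
Expanding the left side, $L_{A,b}(u\zeta_s)=\zeta_sL_{A,b}u+2\langle A\nabla u,\nabla\zeta_s\rangle+u\,L_{A,b}\zeta_s$. Near $\partial D$ we have $|u|\le Cs$ on the support of $\nabla\zeta_s$, while $|\nabla\zeta_s|\le C/s$ and $|D^2\zeta_s|\le C/s^2$, and the strip has volume $\sim s$. Hence the term $u\,{\rm trace}(AD^2\zeta_s)$ behaves like the cross term: after using $u(x)=-\langle\nabla u,\nu\rangle\,{\rm dist}(x,\partial D)+o(s)$, both reduce to averages over the strip of $\langle A\nu,\nu\rangle\langle\nabla u,\nu\rangle\varrho$ against the weights $\frac1s\zeta'$ and $\frac{r}{s^2}\zeta''$. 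The drift contributions vanish as $s\to0$ because $b\varrho\in L^1$ near the boundary. That last point needs care: $b\in L^p(D)$ only if $b\in L^p_{loc}(\Omega)$, which holds since $\overline D\subset\Omega$, and $\varrho\in L^{p'}(D)$.

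The key quantity is therefore the family of measures $\mu_s=\frac1s\,\chi_s(r)\,\varrho\,dx$ on the strip, where $r={\rm dist}(x,\partial D)$ and $\chi_s$ is a fixed combination of $\zeta'$ and $r\zeta''/s$, pushed forward to $\partial D$ by the nearest-point projection. As in Theorem~\ref{th1}, with $\varphi$ from \cite[Lemma 1.3.5]{MarcVeron13} substituted for $u$ and $\varrho\ge0$, the identity yields a uniform bound on $\int\frac1s\mathbf 1_{\{s<r<2s\}}\varrho\,dx$. We choose $\zeta$ so that the combined weight $\int(\zeta'(\tau)\cdot 2-\tau\zeta''(\tau))\,d\tau$ is normalized and the weight is nonnegative, for instance $\zeta$ convex-concave adjusted; this point will need checking. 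This is the main obstacle: obtaining a uniform total-variation bound for the pushed-forward measures, and hence, by Prohorov's theorem \cite[Theorem~8.6.2]{B07}, a weakly convergent subsequence $\mu_{s_j}\to\widetilde\eta$ on $\partial D$. The $G$-terms converge to $\int_{\partial D}\langle G\nu,\nu\rangle\langle\nabla u,\nu\rangle\,d\sigma_{d-1}$ by continuity of $G$. Continuity of $A$ (H1) lets us replace $A(x)$ by $A(\pi x)$ up to $\omega(2s)$ times the bounded mass. Setting $\eta=\widetilde\eta-\kappa\sigma_{d-1}$ then gives \eqref{int}, first for $u\in C^2(\overline D)$ and then by density, using an approximation in $W^{p,2}$ together with uniform $C^1$ convergence, for all admissible $u$.

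If $\varrho\in L^\infty(D)$, the measures $\mu_s$ have densities bounded by $C\|\varrho\|_\infty$ regardless of sign. The bound then holds trivially, weak-$*$ compactness in $L^\infty(\partial D)$ replaces Prohorov's theorem, and nonnegativity is not needed.
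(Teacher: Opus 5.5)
\textbf{Gap.} The argument breaks at exactly the step you flagged, and the remedy you suggest cannot work.

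Write $r={\rm dist}(x,\partial D)$ and $\tau=r/s$. Near $\partial D$ we have $\nabla r=-\nu(\pi x)$ and $u(x)=-r\,\partial_\nu u(\pi x)+o(r)$. So $2\langle A\nabla u,\nabla\zeta_s\rangle+u\,{\rm trace}(AD^2\zeta_s)$ reduces to $-\frac1s\,w(\tau)\langle A\nu,\nu\rangle\partial_\nu u$, where $w=2\zeta'+\tau\zeta''=(\tau\zeta)''$. Note the plus sign; you wrote a minus. Now $\int_0^\infty w\,d\tau=1$, but
\[
\int_0^\infty \tau\,w(\tau)\,d\tau=\int_0^\infty\bigl(\tau^2\zeta'(\tau)\bigr)'\,d\tau=0
\]
for every $\zeta$ that vanishes near $0$ and is constant on $[2,\infty)$. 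Hence $w$ always has a negative part of positive mass. Equivalently: a convex function equal to $\tau$ on $[2,\infty)$ lies above $\tau$ everywhere, so it cannot vanish near $0$. Averaging over $s$ does not help, because the first moment stays zero.

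Consequently, substituting $\varphi$ bounds only a signed average of $\varrho$ over the strip, not $\frac1s\int_{\{s<r<2s\}}\varrho\,dx$. Without that bound you lose two things:
\begin{itemize}
\item compactness of the pushed-forward measures;
\item control of the error terms, namely $\omega(2s)$ from replacing $A(x)$ by $A(\pi x)$ and the $o(1)$ from the Taylor expansions of $u$ and $\nabla u$, each of which is multiplied by the strip mass.
\end{itemize}
As written, your argument proves only the $L^\infty$ case, where the strip bound is trivial.

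\textbf{Repair.} The strip bound is true, but it needs a test function whose normal curvature is nonnegative in the boundary layer, with the compensating negative curvature placed at a fixed depth. For instance, take $\Phi_s=F_s(r)$ with:
\begin{itemize}
\item $F_s=0$ on $[0,s]$;
\item $F_s''=\frac1s\chi(r/s)$, where $\chi\ge0$, ${\rm supp}\,\chi\subset[1,2]$ and $\int\chi=1$;
\item $F_s'=1$ on $[2s,\delta/2]$;
\item $F_s'$ returning to $0$ on $[\delta/2,\delta]$ with $|F_s''|\le C/\delta$.
\end{itemize}
In your product language this means taking $\zeta_s\approx 1-s/r$ for $2s\le r\le\delta/2$, instead of $\zeta(r/s)$. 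Then $\Phi_s\in C^2_0(D)$. Since $\langle A\nabla r,\nabla r\rangle\ge\theta$ and $\varrho\ge0$, the equation gives $\frac1s\int\chi(r/s)\varrho\,dx\le C$ uniformly in $s$. After that, a sign-changing $w$ is harmless and your limit argument goes through, giving a self-contained strip-averaging proof.

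\textbf{Comparison with the paper.} The paper avoids strips altogether. It applies Theorem~\ref{th1}, with $\Omega$ replaced by $D$, on a uniform $C^2$-exhaustion $D_j\uparrow D$. There the boundary measures are nonnegative surface measures, and their masses are bounded uniformly via functions $\varphi_j$ with $\sup_j\|\varphi_j\|_{C^2(\overline{D_j})}<\infty$. It then passes to the limit using test functions $u_j\to u$ in $C^2$ with $u_j=0$ on $\partial D_j$.
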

\begin{proof}
According to \cite[Lemma 1.3.4]{MarcVeron13}, there exists a uniform $C^2$-exhaustion of the
domain  $ D $ by domains $ D_j$ with  boundaries of class $C^2$ and $\overline{ D}_j\subset D $.
This means (see \cite[Definition 1.3.1]{MarcVeron13}) that the domains $ D_j$ increase to~$D$, the closure of $D_j$ is contained in $D_{j+1}$,
every point $a\in\partial D $ possesses a neighborhood $U(a)$ and a coordinate system  $(y_1, \ldots, y_d)$
such that the set $U(a)\cap\partial D_j$ is the graph of a function $f_j(y_1, \ldots, y_{d-1})$ of class $C^2$,
the set $U(a)\cap\partial D$ is the graph of a function $f(y_1, \ldots, y_{d-1})$ of class $C^2$
and the functions $f_j$ converge to $f$ in $C^2(U(a)\cap(\mathbb{R}^{d-1}\times\{0\}))$.
In the main model  case where $D$ is a ball the domains $D_j$ are also balls with the same center.

By Theorem \ref{th1} the solution $\varrho$ on every domain  $ D _j$ satisfies the  Dirichlet  problem \eqref{dir}
with boundary condition $\eta_j+\kappa\sigma_{d-1}$ on $\partial D _j$,
where $\eta_j$ is a finite nonnegative Borel measure on $\partial D_j$.
Further we consider $\eta_j$ as a measure on $\overline{D}$ with support in $\partial D _j$.
According to  \cite[Lemma 1.3.5]{MarcVeron13}, there exists a sequence of
functions $\varphi_j\in C^2(\overline{ D _j})$ such
that $\varphi_j(x)=0$ and $\langle\nabla\varphi_j(x), \nu_j(x)\rangle=1$ whenever $x\in\partial D _j$,
where $\nu_j(x)$ is the outer normal to $\partial D_j$ at the point~$x$,
and $\sup_j\|\varphi_j\|_{C^2(\overline{ D_j})}<\infty$. As in the proof of the previous theorem,
we observe that $\nabla\varphi_j(x)=\nu_j(x)$ if $x\in\partial D_j$. Substituting the function $\varphi_j$
in the integral equality (\ref{int}), we obtain
$$
\int_{ D _j}L_{A, b}\varphi_j(x)\varrho(x)\,dx
=\int_{ D _j}L_{G, h}\varphi_j(x)dx+\int_{\partial D _j}\langle A(x)\nu_j(x), \nu_j(x)\rangle\,\eta_j(dx).
$$
Therefore,
$$
\eta_j(\partial D _j)\le \theta^{-1}\sup_j\|\varphi_j\|_{C^2(\overline{D_j})}\biggl(
\int_{ D }\bigl[d\|A(x)\|+|b(x)|\bigr]\varrho(x)\,dx+
\int_{ D }\bigl[d\|G(x)\|+|h(x)|\bigr]\,dx\biggr).
$$
Since the set $\overline{D}$ is compact, passing to a subsequence we can
assume that the measures $\eta_j$ converge weakly to some bounded nonnegative measure $\eta$ on $\overline{ D }$.
Using that every point $a$ in the support of the measure $\eta$ is the limit
of a convergent sequence of points $a_j$ from the supports of the measures $\eta_j$, we conclude that $a\in\partial D $.
Thus, the support of the measure $\eta$ is contained in $\partial D $.
Let us verify that the function $\varrho$ is a solution to the  Dirichlet  problem with boundary condition
$\eta+\kappa\sigma_{d-1}$ on $\partial D$.
Observe that it suffices to establish the validity of the integral identity (\ref{int})
for functions $u\in C^2(\overline{ D })$ vanishing on $\partial{D}$.
Moreover, we can  assume that $u\in C^2(\mathbb{R}^d)$. There is a  sequence of
functions $u_j\in C^2(\mathbb{R}^d)$ such that $\{u_j\}$ converges to $u$ in $C^2(\overline{D})$
and $u_j=0$ on $\partial D_j$. With the aid of partition of unity we reduce the construction of $u_j$ to the case, where
either 1)~the support of the function $u$ is contained in the interior of all $D_j$
or 2)~the support of $u$ is contained in the aforementioned neighborhood $U(a)$
of some point $a$ from the boundary of~$D$. In the first case we set $u_j=u$ and in the second case we set
$$
u_j(y_1, \ldots, y_d)=u\bigl(y_1, \ldots, y_{d-1}, y_d-f_j(y_1, \ldots, y_{d-1})+f(y_1, \ldots, y_{d-1})\bigr).
$$
Since $\varrho$ is a solution to the  Dirichlet problem on the domain $D_j$, we have
$$
\int_{ D_j}L_{A, b}u_j(x)\varrho(x)\,dx
=\int_{ D_j}L_{G, h}u_j(x)dx+\int_{\partial D_j}\langle A(x)\nabla u_j(x), \nu_j(x)\rangle\,\eta_j(dx).
$$
Using  convergence of $\{u_j\}$ to $u$ in $C^2(\overline{ D })$, we obtain the equalities
$$
\lim_{j\to\infty}\int_{D_j}L_{A, b}u_j(x)\varrho(x)\,dx
=\int_{D}L_{A, b}u(x)\varrho(x)\,dx,
$$
$$
\lim_{j\to\infty}\int_{ D _j}L_{G, h}u_j(x)\, dx=\int_{ D }L_{G, h}u(x)\, dx.
$$
Let us justify the equality
$$
\lim_{j\to\infty}\int_{\partial D _j}\langle A(x)\nabla u_j(x), \nu_j(x)\rangle\,\eta_j(dx)=
\int_{\partial D}\langle A(x)\nabla u(x), \nu(x)\rangle\,\eta(dx).
$$
Since $\{\nabla u_j\}$  converges to $\nabla u$ uniformly on $\overline{ D }$,  it suffices to establish the equality
$$
\lim_{j\to\infty}\int_{\partial D_j}\langle A(x)\nabla u(x), \nu_j(x)\rangle\,\eta_j(dx)=
\int_{\partial D}\langle A(x)\nabla u(x), \nu(x)\rangle\,\eta(dx).
$$
Due to  \cite[Lemma 1.3.4]{MarcVeron13}, we can assume that
for sufficiently large numbers $j$ the set $ D _j$ has the form $\{x\in D \colon d(x)>1/j\}$, where
$d(x)={\rm dist}(x, \partial D )$. According to \cite[Proposition 1.3.2]{MarcVeron13}, for some $\delta>0$ the
function $d$ is twice continuously differentiable on the set
$$
F_{\delta}=\{x\in\overline{ D }\colon \, d(x)\le\delta\},
$$
for every point $x\in F_{\delta}$ there exists a unique  point $s(x)\in\partial D$ such that
$$
|x-s(x)|=d(x), \quad x=s(x)-d(x)\nu(x),
$$
the function $s$ is continuously differentiable on $F_{\delta}$ and
$\nabla d(x)$ tends to $-\nu(s(x))$ when $x$ tends to $s(x)$. It is clear that $s(x)=x$ if $x\in\partial D $.
In addition,
$$
\nu_j(x)=-\nabla d(x)/|\nabla d(x)| \quad \hbox{if } \ x\in\partial D_j.
$$
Therefore, $\sup_{x\in\partial D_j}|\nu_j(x)-\nu(s(x))|$ tends to zero as $j\to\infty$.
Thus, it suffices to justify the equality
$$
\lim_{j\to\infty}\int_{\partial D_j}\big\langle A(x)\nabla u(x), \nu(s(x))\big\rangle\,\eta_j(dx)=
\int_{\partial D}\langle A(x)\nabla u(x), \nu(s(x))\rangle\,\eta(dx),
$$
but it  is a corollary of  weak convergence of the measures $\eta_j$ to the measure $\eta$.
Thus, passing to the limit as $j\to\infty$, we obtain the equality
$$
\int_{D} L_{A, b}u(x)\varrho(x)\,dx
=\int_{D} L_{G, h}u(x)dx+\int_{\partial D}\langle A(x)\nabla u(x), \nu(x)\rangle\,\eta(dx).
$$
Therefore, the function $\varrho$ is a solution to the  Dirichlet  problem on $ D $ with boundary condition
$\eta+\kappa\sigma_{d-1}$ on $\partial D$.

In the case where $\varrho$ is a signed solution of class $L^{\infty}(D)$, by Theorem \ref{th1}
every measure $\eta_j$ on $D_j$ is given by a bounded density with respect to  the standard
surface  measure $\sigma_{d-1}$ and the $L^{\infty}$-norm of this density is estimated from above by $\|\varrho\|_{L^{\infty}(D)}$.
Therefore, the total variations of the measures $\eta_j$ (as measures on $\overline{D}$) are uniformly bounded, hence
we can pick a weakly convergent subsequence. Then we can repeat the reasoning given above.
\end{proof}

Suppose that, as above, $D$ is a bounded domain with smooth boundary,
 $\overline{D}\subset\Omega$ and we are given a  bounded Borel measure $\eta$ on $\partial  D$ (possibly, signed).
Let $\|\eta\|_{TV}$ denote the total variation norm of the measure~$\nu$.

 For every function $f\in L^p_{loc}(\Omega)$ and every bounded set $E$ contained with closure in~$\Omega$,
by the absolute continuity of the Lebesgue integral  there exists a nondecreasing nonnegative continuous function $\omega_{f, E}$ on
 $[0, +\infty)$ for which $\omega_{f, E}(0)=0$ and for every $\delta>0$ and every Lebesgue measurable set $Q\subset E$
with   Lebesgue measure less than $\delta$ one has
$$
\int_Q|f(x)|^p\,dx\le\omega_{f, E}(\delta).
$$
A analogous function $\omega_{v, E}(\delta)$ can be  also found in the case of a vector field $v$ with $|v|\in L^{p}_{loc}(\Omega)$.

The existence of solutions to the  Dirichlet problem is justified in Theorem \ref{th3} below.
We now address the uniqueness problem.

\begin{theorem}\label{th2}
A solution $\varrho$ to the  Dirichlet problem {\rm (\ref{dir})} is unique and satisfies the estimate
$$
\|\varrho\|_{L^{p'}( D )}\le C\Bigl(\|\eta\|_{TV}+\|h\|_{L^1( D )}+\|G\|_{L^{p'}( D )}\Bigr),
$$
where the constant $C$ depends only on $\theta$, $\omega$, $\|b\|_{L^p( D )}$, $\omega_{b, D}$ and the domain  $ D $.
\end{theorem}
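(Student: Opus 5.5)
We argue by duality, using solvability of the direct Dirichlet problem $L_{A,b}u=f$ in $D$, $u=0$ on $\partial D$, in $W^{p,2}(D)$. Since $p>d$, Morrey's embedding gives $u\in C^1(\overline D)$, so such $u$ are admissible test functions in (\ref{int}). The key input is the classical $W^{2,p}$ theory for nondivergence operators with continuous leading coefficients (Gilbarg--Trudinger, Theorem~9.15 and Lemma~9.17, or Krylov's book). Under (H1), (H2) and $b\in L^p(D)$ with $p>d$, for every $f\in L^p(D)$ there is a unique $u\in W^{p,2}(D)$ with $u=0$ on $\partial D$ and $L_{A,b}u=f$, satisfying
$$
\|u\|_{W^{p,2}(D)}\le C_1\|f\|_{L^p(D)}.
$$
The constant $C_1$ depends only on $\theta$, $\omega$, $\|b\|_{L^p(D)}$, $\omega_{b,D}$ and $D$. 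Combined with the embedding this yields $\|u\|_{C^1(\overline D)}\le C_2\|f\|_{L^p(D)}$.

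The first-order term with $b\in L^p$ is treated as a perturbation. The $W^{2,p}$ estimate involves $\|bDu\|_{L^p}$, which is controlled by interpolation, $\|Du\|_{L^\infty}\le \epsilon\|D^2u\|_{L^p}+C_\epsilon\|u\|_{L^p}$. The term $\|u\|_{L^p}$ is then removed by the maximum principle (Aleksandrov--Bakelman--Pucci for strong solutions, valid for $b\in L^d$). The dependence of the constants on $\omega_{b,D}$ rather than only on $\|b\|_{L^p}$ comes from the compactness/contradiction argument that eliminates the lower-order term uniformly over a family of drifts. This is equicontinuity of $|b|^p$ in measure, and it is the step I expect to be the main technical point. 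I would handle it by citing the known results or by a short contradiction argument using weak compactness in $W^{p,2}$.

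Given this, we prove the estimate directly; uniqueness then follows by linearity, since the difference of two solutions with the same data solves (\ref{int}) with $\eta=0$, $G=0$, $h=0$ and so has zero norm. Fix $f\in L^p(D)$ and let $u$ be the solution above. Then (\ref{int}) gives
$$
\int_D f\varrho\,dx=\int_D\bigl[\mathrm{trace}(G D^2u)+\langle h,\nabla u\rangle\bigr]dx+\int_{\partial D}\langle A\nabla u,\nu\rangle\,d\eta .
$$
We bound the right-hand side term by term: the $G$ term by $\|G\|_{L^{p'}}\|D^2u\|_{L^p}$, the $h$ term by $\|h\|_{L^1}\|\nabla u\|_\infty$, and the boundary term by $\theta^{-1}\|\nabla u\|_\infty\|\eta\|_{TV}$. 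Altogether this is at most
$$
C\bigl(\|\eta\|_{TV}+\|h\|_{L^1(D)}+\|G\|_{L^{p'}(D)}\bigr)\|f\|_{L^p(D)}.
$$
Taking the supremum over $\|f\|_{L^p}\le1$ gives the $L^{p'}$ bound on $\varrho$, since $L^{p'}=(L^p)^*$. Note that only $u\in W^{p,2}(D)\cap C^1(\overline D)$ vanishing on $\partial D$ was used, exactly the test class in the definition of a solution to (\ref{dir}).
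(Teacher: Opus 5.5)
Your argument is correct and is essentially the paper's proof. You solve $L_{A,b}u=f$ with zero boundary values in $W^{p,2}(D)$, use $p>d$ to get $u\in C^1(\overline D)$, test (\ref{int}) with $u$, bound the $G$, $h$ and $\eta$ terms, and conclude by $L^{p'}=(L^p)^*$, with uniqueness following by linearity. The one difference is that the paper simply quotes a $W^{p,2}$ solvability theorem that allows $b\in L^p$ (remarking that Gilbarg--Trudinger covers bounded $b$), whereas you sketch the perturbation argument yourself. In your sketch, once ABP removes $\|u\|_{L^p}$ quantitatively, the constant depends only on $\|b\|_{L^p(D)}$ together with $\theta,\omega,D$. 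So the compactness step that you flag as the main difficulty is not actually needed.
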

\begin{proof}
Uniqueness  follows from the stated a priori estimate and the linearity of the problem.
Let $f\in C_0^{\infty}( D )$. According to \cite[Theorem 2.4]{VIT92} (see also \cite{VIT93}),
there exists a solution $u\in W^{p, 2}( D )\cap W^{p, 1}_0( D )$ to the  Dirichlet problem
$$
\left\{\begin{array}{lc}
   L_{A, b}u=f, & x\in  D , \\
   u|_{\partial D }=0.\\
\end{array}\right.
$$
Moreover, one has the estimate
$$
\|u\|_{W^{p, 2}( D )}\le C_1\|f\|_{L^p( D )},
$$
where the constant $C_1$ depends only on $\theta$, $\omega$, $\|b\|_{L^p( D )}$, $\omega_{b, D}$ and~$ D $.
Note that in the case of a bounded coefficient $b$ an analogous result is covered by \cite[Theorem 9.15]{GT},
and the case of more singular coefficients is studied in \cite{Krylov23}.

Since $p>d$, by the Sobolev  embedding theorem the function $u$ has a version continuously differentiable on $\overline{ D }$,
vanishing on $\partial D $ and satisfying the estimate
$$
\|u\|_{C^1(\overline{ D })}\le C_2\|u\|_{W^{p, 2}( D )},
$$
where $C_2$ depends only on $p$ and $ D $.

Substituting the function $u$ into the integral identity (\ref{int}), we obtain
$$
\int_{ D } f(x)\varrho(x)\,dx=\int_{\partial D }\langle A(x)\nabla u(x), \nu(x)\rangle\,\eta(dx)
+\int_{ D } L_{G, h}u(x)\,dx.
$$
Applying the  estimates of the norms $\|u\|_{C^1(\overline{ D })}$ and $\|u\|_{W^{p, 2}( D )}$ stated above,
we arrive at the  inequality
$$
\int_{ D } f(x)\varrho(x)\,dx\le C_3\Bigl(\|\eta\|_{TV}+\|h\|_{L^1(B)}+\|G\|_{L^{p'}( D )}\Bigr)
\|f\|_{L^p( D )},
$$
where the constant $C_3$ depends only on $\theta$, $\omega$, $\|b\|_{L^p( D )}$, $\omega_{b, D}$ and $ D $.
Due to the Riesz theorem on representation of  continuous linear functionals on $L^p( D )$ we obtain
the desired estimate of the $L^{p'}$-norm of the function~$\varrho$.
\end{proof}

\section{Smooth approximations of solutions}

We call a sequence of infinitely smooth functions
$a^{ij}_n$, $b^i_n$, $g^{ij}_n$, $h^i_n$  on
$\Omega$ an {\it admissible approxima\-tion} of the functions $a^{ij}$, $b^i$, $g^{ij}$, $h^i$
if, for every compact set $K\subset\Omega$, the sequence $\{a^{ij}_n\}$ converges to $a^{ij}$ uniformly on $K$,
the matrices $A_n=(a^{ij}_n)$ are symmetric and satisfy on $K$ conditions {\rm (H1)} and {\rm (H2)} with
a number $\theta_K$ and a function $\omega_K$ independent of $n$, the sequence
$\{b^i_n\}$ converges to $b^i$ in $L^p(K)$, there
exists a continuous nondecreasing nonnegative function $\widetilde{\omega}_{K}$ on $[0, +\infty)$ such
that the function $\widetilde{\omega}_{K}$ can be taken for $\omega_{b_n, K}$ for all~$n$,
the sequence $\{g^{ij}_n\}$ converges to $g^{ij}$
in $L^{p'}(K)$, and the sequence $\{h^i_n\}$ converges to $h^i$ in $L^1(K)$ for all $i,j$.

An admissible approximation can be constructed in the following way. Extend the function $a^{ij}$ by the value $\delta^{ij}$
outside $\Omega$ and all other  functions $b^i$, $g^{ij}$ and $h^i$ by zero outside~$\Omega$.
Let $\psi\in C_0^{\infty}(\mathbb{R}^d)$ be such that $\psi\ge 0$, $\|\psi\|_{L^1(\mathbb{R}^d)}=1$
and the support of $\psi$ is contained in the open ball $B(0, 1)$. Set
$$
\psi_{1/n}(x)=n^{d}\psi(nx),\quad
\Omega_n=\{x\in\Omega\colon {\rm dist}(x, \partial\Omega)>1/n\}
$$
 and denote by $I_{\Omega_n}$ the indicator function of~$\Omega_n$.
 Then the sequence of functions
$$
a^{ij}_n=a^{ij}*\psi_{1/n}, \ b^{i}_n=(b^{i}I_{\Omega_n})*\psi_{1/n}, \ g^{ij}_n=(g^{ij}I_{\Omega_n})*\psi_{1/n},  \
h^i_n=(h^iI_{\Omega_n})*\psi_{1/n}
$$
is an admissible approximation.

Let $\{a^{ij}_n\}$, $\{b^i_n\}$, $\{g^{ij}_n\}$, $\{h^i_n\}$ be an admissible approximation of class $C^\infty$,
$ D $ a bounded domain with smooth boundary,
$\overline{D}\subset\Omega$, and let $\{\eta_n\}$ be a sequence of smooth functions on $\partial  D $.

For every $n$ consider the  Dirichlet problem
\begin{equation}\label{dirapr}
\left\{\begin{array}{lc}
  {\rm div}^2\bigl(v A_n\bigr)-{\rm div}\bigl(vb_n\bigr)={\rm div}^2G_n-{\rm div}\,h_n, & x\in D , \\
  \\
  v|_{\partial D }=\eta_n\sigma_{d-1}+\kappa_n\sigma_{d-1},
\end{array}\right.
\end{equation}
where the function $\kappa_n$ is defined by  formula {\rm (\ref{dens})} with $A_n$ and $G_n$ in place of $A$ and $G$.

It is well known  (see, for example, \cite[Chapter 8]{GT}) that the  Dirichlet problem for the
divergence form elliptic equation
$$
{\rm div}\bigl(A_n\nabla v- v \widetilde{b}_n\bigr)=F_n,
$$
where
$$
\widetilde{b}_n^i=b^i_n-\sum_{j=1}^d\partial_{x_j}a^{ij}_n, \quad F_n={\rm div}^2G_n-{\rm div}\,h_n,
$$
on the ball $ D $ with the infinitely differentiable boundary condition $\eta_n-\kappa_n$ on $\partial D $ has a solution~$\varrho_n$
 infinitely differentiable on $\overline{ D }$.
It is clear that $\varrho_n$  solves the  Dirichlet problem~(\ref{dirapr}).

Now from Theorem \ref{th2} we derive convergence of solutions $\varrho_n$ to the solution $\varrho$
of the  Dirichlet  problem (\ref{dir}).

\begin{theorem}\label{th3}
Suppose that a sequence of measures $\eta_n\,\sigma_{d-1}$, given by smooth
densities $\eta_n$ with respect to the measure $\sigma_{d-1}$,  converges weakly to a measure $\eta$ on $\partial D $.
Then the sequence of functions $\varrho_n$ converges weakly in $L^{p'}( D )$
to the solution $\varrho$ of the  Dirichlet problem {\rm (\ref{dir})},
in particular, for every finite Borel measure $\eta$ on $\partial D $
there exists a solution to the  Dirichlet problem {\rm (\ref{dir})}.
Moreover, if the measures $\eta_n\sigma_{d-1}$ converge to the measure $\eta$ on $\partial D $ in variation,
then the sequence $\{\varrho_n\}$ converges to the solution $\varrho$ with respect to the norm of $L^{p'}( D )$.
\end{theorem}
\begin{proof}
According to Theorem \ref{th2}, for every $n$ we have the estimate
$$
\|\varrho_{n}\|_{L^{p'}(D)}\le C_n\bigl(\|\eta_n\|_{TV}+\|G_n\|_{L^{p'}(D)}+\|h_n\|_{L^1(D)}\bigr),
$$
where the number $C_n$ depends only on $p$, $D$, $\omega_{\overline{D}}$, $\theta_{\overline{D}}$, $\|b_n\|_{L^p(D)}$
and $\omega_{b_n, D}$. By the definition of an admissible approximation the norms
$$
\|b_n\|_{L^p(D)}, \quad  \|\eta_n\|_{TV}, \quad \|G_n\|_{L^{p'}(D)}, \quad \|h_n\|_{L^1(D)}
$$
are bounded by some number independent of $n$. In addition, for
$\omega_{b_n, D}$ one can take $\widetilde{\omega}_{\overline{D}}$, hence the constant $C_n$ can be made
independent of $n$. Therefore, the sequence $\{\varrho_n\}$ is bounded in $L^{p'}(D)$.
Hence there exists a subsequence $\{\varrho_{n_j}\}$ converging weakly in $L^{p'}( D )$
to some function~$\varrho$. Let us consider a function $u\in W^{2, p}( D )\cap C^1(\overline{ D })$ vanishing on
 $\partial D $ and justify passage to the limit as $n\to\infty$ in the equality
\begin{equation}\label{intc1}
\int_{ D }L_{A_{n_j}, b_{n_j}}u(x)\varrho_{n_j}(x)\,dx
=\int_{ D }L_{G_{n_j}, h_{n_j}}u(x)dx+
\int_{\partial D }\langle A_{n_j}(x)\nabla u(x), \nu(x)\rangle\,\eta_{n_j}\sigma_{d-1}(dx).
\end{equation}
Since $g^{ij}_n\to g^{ij}$ in $L^{p'}( D )$ and $h^i_n\to h^i$ in $L^1( D )$, we have
$$
\lim_{j\to\infty}\int_BL_{G_{n_j}, h_{n_j}}u(x)dx=\int_BL_{G, h}u(x)\,dx.
$$
Observe that
\begin{multline*}
\biggl|\int_{ D }L_{A_{n_j}, b_{n_j}}u(x)\varrho_{n_j}(x)\,dx
-\int_{ D }L_{A, b}u(x)\varrho_{n_j}(x)\,dx\biggr|
\\
\le
\|A_{n_j}-A\|_{L^{\infty}( D )}\|u\|_{W^{p, 2}( D )}\|\varrho_{n_j}\|_{L^{p'}( D )}+
\|b_{n_j}-b\|_{L^p( D )}\|u\|_{C^1(\overline{ D })}\|\varrho_{n_j}\|_{L^{p'}( D )}.
\end{multline*}
Therefore,
$$
\lim_{j\to\infty}\biggl|\int_{ D }L_{A_{n_j}, b_{n_j}}u(x)\varrho_{n_j}(x)\,dx
-\int_{ D }L_{A, b}u(x)\varrho_{n_j}(x)\,dx\biggr|=0.
$$
Thus, for passing to the limit in the left-hand side of (\ref{intc1}) it suffices to observe that
due to weak convergence of the functions $\varrho_{n_j}$ to $\varrho$ we have
$$
\lim_{j\to\infty}\int_{ D }L_{A, b}u(x)\varrho_{n_j}(x)\,dx=
\int_{ D }L_{A, b}u(x)\varrho(x)\,dx.
$$
Let us consider the second term in the right-hand side of (\ref{intc1}). Since the
matrices  $A_n$  converge uniformly to $A$ and, by virtue of weak convergence,
the sequence of measures $\eta_n\sigma_{d-1}$ is bounded in  variation, it suffices to justify the equality
$$
\lim_{j\to\infty}\int_{ \partial D }\langle A(x)\nabla u(x), \nu(x)\rangle\eta_{n_j}\,d\sigma_{d-1}=
\int_{ D }\langle A(x)\nabla u(x), \nu(x)\rangle\, \eta(dx).
$$
By the continuity of $A$ and $\nabla u$ this equality  follows from weak convergence of the measures $\eta_{n_j}\sigma_{d-1}$ to the measure $\eta$.
Thus, letting $j\to\infty$, we obtain the relationship
$$
\int_{ D } L_{A,b}u (x)\varrho(x)\,dx=\int_{\partial D }\langle A(x)\nabla u(x), \nu(x)\rangle\,\eta(dx)
+\int_{ D }L_{G, h}u(x)\,dx.
$$
Therefore, $\varrho$ is a solution to the  Dirichlet problem (\ref{dir}). Since a solution is unique and the reasoning above
can be repeated for every subsequence $\{\varrho_{n_k}\}$ in place of $\{\varrho_n\}$, we conclude that
the whole sequence $\{\varrho_n\}$ converges weakly in $L^{p'}( D )$ to the solution $\varrho$.

Let us prove the last assertion of the corollary.
Suppose that the  measures $\eta_n\sigma_{d-1}$ converge to the measure $\eta$ in variation.

Observe that for every function $u\in W^{p, 2}( D )\cap C^1(\overline{ D })$ vanishing on $\partial  D $ we have
$$
\int_{ D }L_{A, b}u(x)\varrho(x)\,dx
=\int_{ D }L_{G, h}u(x)dx+\int_{\partial D }\langle A(x)\nabla u(x), \nu(x)\rangle\,\eta(dx)
$$
and
$$
\int_{ D }L_{A_n, b_n}u(x)\varrho_n(x)\,dx
=\int_{ D }L_{G_n, h_n}u(x)dx+\int_{\partial D }\langle A_n(x)\nabla u(x), \nu(x)\rangle\,\eta_n(x)\sigma_{d-1}(dx).
$$
Set $\zeta_n=\varrho_n-\varrho$ and
$$
\widetilde{G}_n=(G_n-G)+\varrho_n(A-A_n), \quad \widetilde{h}_n=(h_n-h)+\varrho_n(b-b_n),
$$
$$
\widetilde{\eta}_n=(\eta_n\sigma_{d-1}-\eta)
+\Bigl(\frac{\langle A_n\nu, \nu\rangle}{\langle A\nu, \nu\rangle}-1\Bigr)\eta_n\sigma_{d-1}.
$$
Then we obtain the equality
$$
\int_{ D }L_{A, b}u(x)\zeta_n(x)\,dx
=\int_{ D }L_{\widetilde{G}_n, \widetilde{h}_n}u(x)dx
+\int_{\partial D }\langle A(x)\nabla u(x), \nu(x)\rangle\,\widetilde{\eta}_n(dx).
$$
Here we also use the equality $\nabla u(x)=\langle\nabla u(x), \nu(x)\rangle\nu(x)$ if $x\in \partial D$.
Thus, the function $\zeta_n$ is a solution to the Dirichlet problem for the equation
$$
{\rm div}^2 (\zeta_n A)+{\rm div}(\zeta_n b)
={\rm div}^2\bigl(\widetilde{G}_n\bigr)+{\rm div}\bigl(\widetilde{h_n}\bigr)
$$
with boundary condition $\widetilde{\eta}_n+\widetilde{\kappa}_n\sigma_{d-1}$,
where the function $\widetilde{\kappa}$ is defined by formula~(\ref{dens})
with $\widetilde{G}_n$ in place of~$G$. Note that the measures $\widetilde{\eta}_n$
converge to zero in total variation.
We also have
$$
\|(A-A_n)\varrho_n\|_{L^{p'}(D)}\le \|A-A_n\|_{L^{\infty}(D)}\|\varrho_n\|_{L^{p'}(D)},
$$
$$
\|(b-b_n)\varrho_n\|_{L^1(D)}\le \|b-b_n\|_{L^{p}(D)}\|\varrho_n\|_{L^{p'}(D)}.
$$
Recall that the sequence $\{\varrho_n\}$ is bounded in $L^{p'}(D)$.
Applying the  estimate from Theorem~\ref{th2}, we obtain
\begin{multline*}
\|\zeta_n\|_{L^{p'}(D)}\le C\Bigl(\|\widetilde{\eta_n}\|_{TV}+\|G_n-G\|_{L^{p'}(D)}+\|A_n-A\|_{L^{\infty}(D)}\|\varrho_n\|_{L^{p'}(D)}
\\
+\|h_n-h\|_{L^1(D)}+\|b-b_n\|_{L^{p}(D)}\|\varrho_n\|_{L^{p'}(D)}\Bigr).
\end{multline*}
Therefore, $\{\zeta_n\}$ tends to zero in $L^{p'}(D)$ as $n\to\infty$.
\end{proof}

Thus, the  Dirichlet problem (\ref{dir}) has a unique solution and this solution can be obtained as a weak limit
in $L^{p'}( D )$ of solutions to  Dirichlet problems for equations with smooth coefficients and smooth boundary conditions.

\begin{corollary}\label{col1}
Let $G=0$, $h=0$ and let $\eta$ be a nonnegative measure on $\partial D $.
Then the solution~$\varrho$ to the  Dirichlet problem {\rm (\ref{dir})} is
almost everywhere nonnegative on $ D $.
\end{corollary}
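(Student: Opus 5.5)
The plan is to obtain $\varrho$ as the limit of smooth solutions given by Theorem~\ref{th3}, and to check that these smooth solutions are nonnegative.

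\textbf{Approximating boundary data.} Since $\eta\ge 0$, we choose smooth nonnegative densities $\eta_n$ on $\partial D$ such that $\eta_n\sigma_{d-1}\to\eta$ weakly. One way is to mollify $\eta$ along the compact $C^2$ surface $\partial D$ with a nonnegative kernel and add $1/n$ if strict positivity is wanted. We take the admissible approximation constructed above with $G_n=0$ and $h_n=0$, so that $\kappa_n=0$. Let $\varrho_n\in C^\infty(\overline D)$ be the classical solution of the divergence form problem
$$
{\rm div}(A_n\nabla v-v\widetilde b_n)=0 \ \hbox{in } D, \qquad v=\eta_n \ \hbox{on } \partial D.
$$
By Theorem~\ref{th3}, $\varrho_n\to\varrho$ weakly in $L^{p'}(D)$. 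Weak limits preserve the closed convex cone of a.e.\ nonnegative functions, since $\int f\varrho_n\ge0$ for every $f\ge0$ in $L^p(D)$ passes to the limit. So it suffices to prove $\varrho_n\ge0$ in $D$.

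\textbf{Nonnegativity of $\varrho_n$.} Expanding, $\varrho_n$ satisfies a smooth-coefficient equation in nondivergence form,
$$
{\rm trace}(A_nD^2v)+\langle \beta_n,\nabla v\rangle+c_nv=0,
$$
with $c_n={\rm div}^2A_n-{\rm div}\,b_n$. Here $c_n$ has no sign, so the classical maximum principle does not apply directly. This is the main obstacle, and we handle it by duality.

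Take any $f\in C_0^\infty(D)$ with $f\ge0$. Solve $L_{A_n,b_n}u=-f$ in $D$ with $u=0$ on $\partial D$, with $u\in C^2(\overline D)$; this is possible because the coefficients are smooth. Since $L_{A_n,b_n}$ has no zero-order term, the weak maximum principle applies: $L_{A_n,b_n}u\le0$ and $u=0$ on $\partial D$ give $u\ge0$ in $D$. It follows that $\langle\nabla u,\nu\rangle\le0$ on $\partial D$, hence $\langle A_n\nabla u,\nu\rangle=\langle A_n\nu,\nu\rangle\langle\nabla u,\nu\rangle\le0$.

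Now use the Dirichlet problem identity for $\varrho_n$, which is the identity (\ref{int}) with $A_n,b_n$, with $G=0$ and $h=0$, and with boundary measure $\eta_n\sigma_{d-1}$. It gives
$$
-\int_D f\varrho_n\,dx=\int_{\partial D}\langle A_n\nabla u,\nu\rangle\,\eta_n\,d\sigma_{d-1}\le0 .
$$
Hence $\int_D f\varrho_n\,dx\ge0$ for all $f\ge0$, so $\varrho_n\ge0$.

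Alternatively, the same duality can be applied directly to $\varrho$ with $L_{A,b}$, solving via \cite[Theorem 2.4]{VIT92}. That route requires the maximum principle for strong $W^{p,2}$ solutions (Aleksandrov--Bakelman, available since $p>d$) and avoids approximation altogether. Either way, the delicate point is the sign of the normal derivative of $u$ on $\partial D$, which follows from $u\ge0$ in $D$ and $u=0$ on $\partial D$.
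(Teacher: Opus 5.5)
Your proof is correct, and its skeleton matches the paper's. You take the smooth solutions $\varrho_n$ of Theorem~\ref{th3} with nonnegative densities $\eta_n$, and you use that weak limits in $L^{p'}(D)$ preserve a.e.\ nonnegativity. The difference is in the key step $\varrho_n\ge 0$.

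The paper simply invokes ``the maximum principle'' for ${\rm div}(A_n\nabla v-v\widetilde b_n)=0$. You observe that this equation carries the unsigned zero-order coefficient ${\rm div}^2A_n-{\rm div}\,b_n$. Equivalently, the condition ${\rm div}\,\widetilde b_n\ge 0$ behind the weak maximum principle of \cite[Chapter 8]{GT} fails in general. Indeed the $\sup$-form of the principle is false here: for $d=1$, $A=1$, the function $v=e^{B}$ with $B'=b$ and $B(0)=B(1)=0$ solves $(v'-bv)'=0$ on $(0,1)$ and can be arbitrarily large inside. So what is really needed is a positivity statement. You prove it by duality with the direct operator $L_{A_n,b_n}$, which has no zero-order term, together with the sign $\langle\nabla u,\nu\rangle\le 0$ on $\partial D$. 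This supplies a justification the paper leaves implicit.

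Your alternative is shorter still: apply the same duality directly to $\varrho$ and $L_{A,b}$, using the $W^{p,2}$ solvability from \cite{VIT92} and the Aleksandrov maximum principle (available since $p>d$). It is just the proof of Theorem~\ref{th2} with the sign tracked. It needs neither Theorem~\ref{th3}, nor smoothing of $\eta$, nor boundary regularity of $\varrho_n$. Applied to the smooth coefficients $A_n,b_n$, it also yields the nonnegative approximants used in Corollary~\ref{col2}.

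One small correction: with a merely $C^2$ boundary you should not claim $u\in C^2(\overline D)$. You get $u\in W^{q,2}(D)$ for all $q<\infty$, hence $u\in C^1(\overline D)$, and that is all identity (\ref{int}) requires.
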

\begin{proof}
By the previous theorem the solution $\varrho$ can be obtained as a weak limit in $L^{p'}( D )$ of solutions $\varrho_n$
to smooth problems, moreover, we can assume that the densities $\eta_n$ mentioned in the theorem  are nonnegative.
Applying the  maximum principle for solutions to the divergence form elliptic equations
$$
{\rm div}(A_n\nabla v- v \widetilde{b}_n)=0,
$$
we conclude that all functions $\varrho_n$ are nonnegative on $ D $. This
implies that the function $\varrho$ is nonnegative.
\end{proof}

The results proved above enable us to construct approximations by smooth solutions
of an arbitrary solution to equation (\ref{eq1}). The proof of the following statement
is based on the approximation procedure elaborated for divergence type equations in \cite{D12}
and employed in~\cite{DK17}. We are most grateful to
Professor Hongjie Dong for his advice to apply his method.

\begin{corollary}\label{col2}
Suppose that under the hypotheses of Theorem {\rm \ref{th1}} the functions $a^{ij}_n$, $b^i_n$, $g^{ij}_n$
and $h^i_n$ are an admissible approximation of the functions $a^{ij}$, $b^i$, $g^{ij}$
and $h^i$.  Assume also that a function $\varrho\in L^{p'}_{loc}(\Omega)$ is a solution to
equation {\rm (\ref{eq1})} and a bounded domain $D$ with smooth boundary is contained with closure in the domain $\Omega$.
Then there exists a sequence of infinitely differentiable solutions $\varrho_n$ to the equations
$$
{\rm div}^2 (v A_n)-{\rm div}(v b_n)={\rm div}^2G_n-{\rm div}\,h_n
$$
on  $D$ such that $\{\varrho_n\}$ converges in the  norm of $L^{p'}(D)$ to the function $\varrho$. Moreover, if
$G=0$, $h=0$ and $\varrho\ge 0$, we can find nonnegative functions $\varrho_n$ on~$D$.
\end{corollary}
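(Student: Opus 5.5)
We argue by perturbation: mollify $\varrho$, observe that the mollified function solves a Dirichlet problem with smooth data for the approximating operator up to small interior errors, and then correct it with Theorem \ref{th2}. The key device is to choose the boundary data so that no boundary error appears at all, since for a signed $\varrho\in L^{p'}_{loc}$ we have no control of $\varrho_\varepsilon$ on $\partial D$.

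\textbf{Step 1 (the mollified identity).} Take $\varrho_\varepsilon=\varrho*\psi_\varepsilon$ as in the proof of Theorem \ref{th1}, with $\varepsilon$ small so that $D_{3\varepsilon}\subset\Omega$. For every $u\in W^{p,2}(D)\cap C^1(\overline D)$ vanishing on $\partial D$, identity (\ref{apr}) holds. Since $u=0$ on $\partial D$, we have $\nabla u=\langle\nabla u,\nu\rangle\nu$ there. Hence the boundary terms in (\ref{apr}) equal $\int_{\partial D}\langle A_n\nabla u,\nu\rangle\,\eta_{n,\varepsilon}\,d\sigma_{d-1}$, where
$$
\eta_{n,\varepsilon}=\frac{\langle (A\varrho)_\varepsilon\nu,\nu\rangle-\langle G_\varepsilon\nu,\nu\rangle}{\langle A_n\nu,\nu\rangle}.
$$
This is a smooth function on $\partial D$. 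We rewrite the interior terms of (\ref{apr}) with $A_n,b_n,G_n,h_n$ in place of the mollified coefficients, putting the discrepancies into the right-hand side. This shows that $\varrho_\varepsilon$ solves, in the sense of (\ref{int}), the Dirichlet problem for $L^*_{A_n,b_n}$ with boundary measure $\eta_{n,\varepsilon}\sigma_{d-1}$ and right-hand side ${\rm div}^2(G_n+E)-{\rm div}(h_n+e)$. The errors are
$$
E=(G_\varepsilon-G_n)+(A_n\varrho_\varepsilon-(A\varrho)_\varepsilon),\qquad e=(h_\varepsilon-h_n)+(b_n\varrho_\varepsilon-(b\varrho)_\varepsilon).
$$

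\textbf{Step 2 (the smooth approximant).} Let $\varrho_n$ be the smooth solution of the Dirichlet problem (\ref{dirapr}) with boundary density $\eta_{n,\varepsilon}$ (taken as $\eta_n$), constructed via the divergence form problem as before Theorem \ref{th3}. Then $\zeta=\varrho_n-\varrho_\varepsilon$ solves the Dirichlet problem for $L^*_{A_n,b_n}$ with boundary measure $0$ and right-hand side $-{\rm div}^2E+{\rm div}\,e$. Theorem \ref{th2} applies with constants uniform in $n$, by the definition of an admissible approximation exactly as in the proof of Theorem \ref{th3}. It gives
$$
\|\varrho_n-\varrho_\varepsilon\|_{L^{p'}(D)}\le C\bigl(\|E\|_{L^{p'}(D)}+\|e\|_{L^1(D)}\bigr).
$$

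\textbf{Step 3 (choosing $\varepsilon=\varepsilon_n$).} We have
$$
\|A_n\varrho_\varepsilon-(A\varrho)_\varepsilon\|_{L^{p'}(D)}\le\bigl(\|A_n-A\|_{L^\infty(D_\varepsilon)}+\omega(\varepsilon)\bigr)\|\varrho\|_{L^{p'}(D_{2\varepsilon})}.
$$
Moreover $(b\varrho)_\varepsilon\to b\varrho$ and $b_n\varrho_\varepsilon\to b\varrho$ in $L^1(D)$, the latter because $b_n\to b$ in $L^p$ and $\varrho_\varepsilon\to\varrho$ in $L^{p'}$. Also $G_\varepsilon-G_n\to0$ in $L^{p'}$, $h_\varepsilon-h_n\to0$ in $L^1$, and $\varrho_\varepsilon\to\varrho$ in $L^{p'}(D)$. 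Hence taking $\varepsilon=\varepsilon_n\to0$ gives $\varrho_n\to\varrho$ in $L^{p'}(D)$. Smoothness of $\varrho_n$ in the interior, or even on $\overline D$, comes from the smooth-coefficient theory.

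\textbf{Step 4 (nonnegativity).} If $G=0$, $h=0$ and $\varrho\ge0$, then $G_n=0$, $h_n=0$ and
$$
\eta_{n,\varepsilon}=\frac{\langle(A\varrho)_\varepsilon\nu,\nu\rangle}{\langle A_n\nu,\nu\rangle}\ge\frac{\theta\varrho_\varepsilon}{\langle A_n\nu,\nu\rangle}\ge0.
$$
Thus $\varrho_n$ solves the homogeneous smooth problem with nonnegative boundary data, and the maximum principle used in Corollary \ref{col1} gives $\varrho_n\ge0$.

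The main obstacle is Step 1: the boundary error must be absorbed exactly into the choice of $\eta_{n,\varepsilon}$, using $\nabla u\parallel\nu$ on $\partial D$. For signed $\varrho$ the traces $\varrho_\varepsilon|_{\partial D}$ are not uniformly bounded, so any boundary error term could not be controlled by Theorem \ref{th2}.
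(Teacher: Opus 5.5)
Your argument is correct, and it takes a different route from the paper's. In the general case the paper localizes. With a cutoff $\zeta$ equal to $1$ near $\overline D$, it turns $\varrho\zeta$ into a solution on a larger domain $D_0$ with zero boundary condition. The cost is an extra vector field $w$ in $h_0$, obtained by solving ${\rm div}\,w=f$ with $f\in L^1$ (Marcus--V\'eron). It then repeats the norm-convergence argument from the end of the proof of Theorem~\ref{th3}, where zero boundary data make the boundary error trivial. In the nonnegative case the paper proceeds differently:
\begin{itemize}
\item it uses Theorem~\ref{th1} to get a nonnegative boundary measure $\eta$ on $\partial D$;
\item it takes $\varrho_n$ to be the solutions of the smooth-coefficient problems with that same measure $\eta$, so they are smooth only inside $D$;
\item it gets $\varrho_n\ge0$ from Corollary~\ref{col1};
\item it gets norm convergence because $\frac{\langle A_n\nu,\nu\rangle}{\langle A\nu,\nu\rangle}\eta\to\eta$ in variation.
\end{itemize}
You instead reuse the mollified identity \eqref{apr} and put the whole boundary contribution into the smooth density $\eta_{n,\varepsilon}$. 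Then $\varrho_n-\varrho_\varepsilon$ has zero boundary measure, and only interior errors remain, which Theorem~\ref{th2} controls uniformly in $n$. This needs only Theorem~\ref{th2} and the classical smooth Dirichlet problem. It avoids the auxiliary domain, the $L^1$-divergence lemma, the compactness argument of Theorem~\ref{th1} and the existence part of Theorem~\ref{th3}. The nonnegative case also comes from the same construction, since $\eta_{n,\varepsilon}\ge\theta\varrho_\varepsilon/\langle A_n\nu,\nu\rangle\ge0$. In exchange, the paper's route presents the approximants as solutions of Dirichlet problems of exactly the type studied earlier: zero data on $D_0$, or the very measure $\eta$ on $\partial D$. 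One small point: admissibility does not force $G_n=0$, $h_n=0$ when $G=0$, $h=0$. Like the paper, you are choosing that approximation, and your maximum-principle step needs it.
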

\begin{proof}
Let $D_0$ be a bounded domain with smooth boundary such that $\overline{D}\subset D_0$
and $\overline{D_0}\subset \Omega$.
There exists $\delta>0$ such that $D^{4\delta}\subset D_0$ and $D_0^{2\delta}\subset\Omega$.
Let $\zeta\in C_0^{\infty}(D^{2\delta})$ and $\zeta(x)=1$ if $x\in D^{\delta}$.
Set
$$
f={\rm trace}(GD^2\zeta)-\varrho\,{\rm trace}(AD^2\zeta)-\varrho\langle b, \nabla\zeta\rangle
+\langle h, \nabla\zeta\rangle.
$$
Then $f\in L^1(D^{\delta}$. It is known (see \cite[Theorem~1.2.2]{MarcVeron13}) that
there exists a vector field $w$ with $|w|\in L^s(D_0^{\delta})$ for all $s\in [1,d/(d-1))$
such that ${\rm div}\,w=f$ on $D_0$.
Then
$$
{\rm div}^2(\varrho\zeta A)-{\rm div}(\varrho\zeta b)=
{\rm div}^2 G_0-{\rm div}(h_0),
$$
where
$$
G_0=G\zeta, \quad h_0=h\zeta+2G\nabla\zeta-2\varrho A\nabla\zeta-w.
$$
It is clear that $\varrho\zeta$ is a solution to the Dirichlet problem (\ref{dir}) on $D_0$
with zero boundary condition and $G_0, h_0$ in place of $G, h$. Set
$$
G_{0,n}=(g^{ij}_{0,n})=G_n\zeta, \quad
h_{0,n}=(h^{i}_{0,n})=h_n\zeta-G_n\nabla\zeta+(\varrho A\nabla\zeta-w)*\psi_{1/n},
$$
where $\psi_{1/n}(x)=n^{d}\psi(nx)$, $n^{-1}<\delta$ and $\psi\in C_0^{\infty}(\mathbb{R}^d)$
are such that $\psi\ge 0$, $\|\psi\|_{L^1(\mathbb{R}^d)}=1$
and the support of $\psi$ is contained in the open ball $B(0, 1)$.
Note that the functions $a^{ij}_n, b_n^i, g^{ij}_{0,n}, h^i_{0,n}$ provide
an admissible approximation of the functions $a^{ij}$, $b^i$, $g^{ij}$ and $h^i$
on $D_0^{\delta}$. Theorem \ref{th3} gives solutions $\varrho_n$ to the Dirichlet problems
$$
\left\{\begin{array}{lc}
  {\rm div}^2 (v A_n)-{\rm div} ( v b_n)={\rm div}^2 G_{0,n}-{\rm div}\, h_{0,n}, & x\in D_0, \\
  \\
  \varrho_n|_{\partial D_0}=0.
\end{array}\right.
$$
Since the coefficients $a^{ij}_n$, $b^i_n$, $g^{ij}_{0,n}$, $h^i_{0,n}$ are smooth, the
solutions $\varrho_n$ are smooth functions in~$D_0$.
The justification of convergence of $\{\varrho_n\}$ to $\varrho\zeta$ in $L^{p'}(D_0)$
repeats the reasoning used at the end of the proof of Theorem~\ref{th3}.
Note that ${\rm div}(w*\psi_{1/n})=0$ in~$D$. Moreover, on the domain $D$ we have $\varrho\zeta=\varrho$,
$G_{0,n}=G_n$,  ${\rm div}\, h_{0,n}={\rm div}\,h_n$. Thus, on $D$ the function $\varrho_n$
satisfies the equation
$$
{\rm div}^2 (v A_n)-{\rm div} ( vb_n)={\rm div}^2G_n-{\rm div}\,h_n
$$
and the sequence $\{\varrho_n\}$ converges to $\varrho$ in $L^{p'}(D)$.

Let us consider the case where $G=0$, $h=0$ and $\varrho\ge 0$.
According to Theorem \ref{th1}, the function $\varrho$ solves the Dirichlet problem  (\ref{dir}) on
$D$ with some measure $\eta$ on $\partial D$. Theorem \ref{th3} gives solutions $\varrho_n$ to the Dirichlet problems
$$
\left\{\begin{array}{lc}
  {\rm div}^2 (v A_n)-{\rm div} ( b_n v)=0, & x\in D, \\
  \\
  \varrho_n|_{\partial D}=\eta\sigma_{d-1}.
\end{array}\right.
$$
Since the coefficients $a^{ij}_n$, $b^i_n$ are smooth, the
solutions $\varrho_n$ are smooth functions in~$D$. By Corollary \ref{col1} we have $\varrho_n\ge 0$.
The justification of convergence of $\{\varrho_n\}$ to $\varrho$ in $L^{p'}(D)$
repeats the reasoning used at the end of the proof of Theorem~\ref{th3}.
\end{proof}

As an application of the obtained results we shall show how the justification of \cite[Theo\-rem~1.10]{DK17} and \cite[Theprem 2.15]{K25}
can be  reduced to the case of smooth solutions to equations with smooth coefficients.

A continuous function $f$ on $\Omega$ is said to satisfy the Dini mean oscillation condition if there
exists a continuous nondecreasing nonnegative function $\widetilde{\omega}$ on $[0, +\infty)$ such that
$$
\widetilde{\omega}(0)=0, \quad \int_0^1\frac{\widetilde{\omega}(t)}{t}\,dt<\infty,
$$
and for all $r>0$ one has
$$
\sup_{x\in\Omega}\frac{1}{|\Omega(x, r)|}\int_{\Omega(x, r)} |f(y)-f_{\Omega}(x, r)|\,dy\le \widetilde{\omega}(r),
$$
where
$$
f_{\Omega}(x, r)=\frac{1}{|\Omega(x, r)|}\int_{\Omega(x, r)}f(y)\,dy, \quad \Omega(x, r)=\Omega\cap B(x, r).
$$
Recall that for a ball $B$ the symbol $W^{p, 1}_0(B)$ denotes the  closure of $C_0^\infty(B)$ in $W^{p, 1}(B)$;
for $p>d$ this class consists of continuous functions on~$\overline{B}$ vanishing on~$\partial B$ and possessing generalized first order
partial derivatives  from~$L^p(B)$.

\begin{theorem}\label{th4}
Suppose that  conditions {\rm (H1)} and {\rm (H2)} are fulfilled, the functions $g^{ij}$ are continuous, the functions $a^{ij}$
and $g^{ij}$ satisfy the Dini mean oscillation condition and $b^i=h^i=0$.
Let  $\varrho\in L^1_{loc}(\Omega)$ be a solution to equation \eqref{eq1}.
Then the function $\varrho$ possesses a continuous version on~$\Omega$.
\end{theorem}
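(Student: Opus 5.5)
The plan is to reduce to smooth solutions of equations with smooth coefficients, using Corollary~\ref{col2}. The key point is that $\varrho$ is only in $L^1_{loc}$, while the paper's machinery requires $\varrho\in L^{p'}_{loc}$. So the first step is to upgrade the integrability.

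\textbf{Step 1: local integrability upgrade.} With $b=h=0$ and $A$ continuous and nondegenerate, we need a result that an $L^1_{loc}$ solution of ${\rm div}^2(\varrho A)={\rm div}^2 G$ lies in $L^{r}_{loc}$ for every $r<\infty$. We may take $G$ continuous, hence locally bounded. I would prove this by duality, following the proof of Theorem~\ref{th2}.

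Fix a ball $B$ with $\overline{B}\subset\Omega$ and a cutoff $\zeta$. As in the proof of Corollary~\ref{col2}, $\varrho\zeta$ satisfies
\[
{\rm div}^2(\varrho\zeta A)={\rm div}^2(G\zeta)-{\rm div}\,h_0,
\]
where $h_0=2G\nabla\zeta-2\varrho A\nabla\zeta-w$. The field $w$ solves ${\rm div}\,w=f$ and lies in $L^s$ for $s<d/(d-1)$. Thus $h_0\in L^s$ for $s<d/(d-1)$, and $\varrho\zeta$ has zero boundary condition on a larger ball.

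For test functions, solve $L_{A,0}u=f$ with $f\in C_0^\infty$ and zero boundary values, using the $W^{q,2}$ estimate for large $q$. Since $\nabla u\in L^\infty$, pairing gives a bound of $\varrho\zeta$ in $L^{q'}$ for $q$ large. This is only a slight gain over $L^1$, but it is all we need: every $p'$ close to $1$ is allowed in (H3) with $b=0$.

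Iterating, the term $\varrho A\nabla\zeta$ improves, and $w$ can be redone in better classes. Alternatively, because $b=0$ we may choose $p$ arbitrarily large, so $p'$ is arbitrarily close to $1$. Even $L^{1+\epsilon}_{loc}$ is then enough to apply Theorem~\ref{th1} and Corollary~\ref{col2}. This duality step also needs the uniqueness argument of Theorem~\ref{th2} to hold in the class $L^1$ with $w$ in the divergence term. Handling that carefully, by approximating $\varrho$ by mollifications and passing to limits with $\nabla u$ bounded, is the main obstacle.

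\textbf{Step 2: approximation.} Take an admissible approximation by mollification, $A_n=A*\psi_{1/n}$ and $G_n=(G I_{\Omega_n})*\psi_{1/n}$. Mollification preserves the Dini mean oscillation condition with a uniform modulus on compacta, comparable to $\widetilde\omega$. For a ball $D$ with $\overline{D}\subset\Omega$, Corollary~\ref{col2} gives smooth solutions $\varrho_n$ of ${\rm div}^2(\varrho_nA_n)={\rm div}^2G_n$ on $D$ with $\varrho_n\to\varrho$ in $L^{p'}(D)$.

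\textbf{Step 3: uniform estimates.} For smooth solutions, I would invoke the a priori estimates of Dong--Kim \cite{DK17}, as the paper explicitly allows: their proof is valid for locally bounded, in particular smooth, solutions. On $B(x_0,r)$ with $\overline{B(x_0,2r)}\subset D$, these give a uniform modulus of continuity for $\varrho_n$ depending only on $\theta$, the Dini moduli of $A_n$ and $G_n$ (uniform in $n$), $\|\varrho_n\|_{L^1(D)}$ and $\sup\|G_n\|$. Hence $\{\varrho_n\}$ is equicontinuous and locally uniformly bounded on compacta of $D$. By Arzel\`a--Ascoli a subsequence converges locally uniformly, and the limit coincides a.e.\ with $\varrho$ by the $L^{p'}$ convergence. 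So $\varrho$ has a continuous version on each such $D$. These versions agree on overlaps, and exhausting $\Omega$ by balls gives a continuous version on $\Omega$.
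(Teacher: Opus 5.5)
Your Steps 2 and 3 match the paper's argument: smooth approximation with uniform Dini moduli, Corollary~\ref{col2}, the uniform modulus of continuity and sup bound from \cite{DK17} for the smooth $\varrho_n$, then Arzel\`a--Ascoli. The gap is in Step 1, which is the step that actually needs work. For $\varrho\in L^1_{loc}$ you may not use $u\zeta$ with $u\in W^{q,2}$ as a test function. Neither Theorem~\ref{th2} nor the Dirichlet formulation for $\varrho\zeta$ is available, because both require $\varrho\zeta\in L^{p'}$ in the definition of a solution. So your duality bound is circular, as you partly acknowledge.

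Your proposed repair does not close the gap. The obstruction is not in the terms containing $\nabla u$, which are harmless against $\varrho\in L^1$; it is in the second-order term. If you mollify $u$, you must show ${\rm trace}(AD^2u_k)\to f$ in a topology dual to $L^1$, essentially uniformly, and this fails when $D^2u$ is only in $L^q$. If you mollify $\varrho$, the commutator $(A\varrho)_\varepsilon-A\varrho_\varepsilon$ is small only in $L^1$, of size $\omega(\varepsilon)\|\varrho\|_{L^1}$. It must be paired with $D^2(u\zeta)\in L^q$, which is not controlled. Boundedness of $\nabla u$ plays no role in either case. Iterating, or re-solving for $w$ in better classes, does not help, since every round again needs some integrability beyond $L^1$. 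You are right that $L^{1+\epsilon}$ would suffice, but nothing in your sketch produces it.

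The missing idea is to use the Dini mean oscillation hypothesis on $A$ already in Step 1, not only in Step 3. By \cite[Theorem~1.6]{DK17}, take $f\in C_0^\infty(B(x_0,r))$ and solve ${\rm trace}(AD^2u)=f$ in $B(x_0,3r)$ with zero boundary values. This solution is twice continuously differentiable on $B(x_0,2r)$, so $u\zeta$ is a compactly supported $C^2$ function and a legitimate test function for an $L^1_{loc}$ solution. Since $\zeta\,{\rm trace}(AD^2u)=f$, the identity pairs $\varrho$ only with $u$ and $\nabla u$, multiplied by derivatives of $\zeta$. The continuous $G$ is paired with $D^2(u\zeta)$. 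The $W^{p,2}$ estimate and Sobolev embedding then give $|\int f\varrho\,dx|\le C\|f\|_{L^p}$. Here $C$ also depends on $\|\varrho\|_{L^1(B(x_0,2r))}$ and $\sup\|G\|$. Hence $\varrho\in L^{p'}(B(x_0,r))$ for every $p>d$, in one step. This needs no cut-off equation, no field $w$, no iteration and no uniqueness in $L^1$. Without some such regularity input beyond continuity of $A$, your route as sketched does not yield the integrability upgrade.
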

\begin{proof}
Let $p>d$. We prove that $\varrho\in L^{p'}_{loc}(\Omega)$, where $p'=p/(p-1)$.
Let $B(x_0, 4r)\subset \Omega$ and $\zeta\in C_0^{\infty}(B(x_0, 2r)$, $0\le \zeta\le 1$
and $\zeta(x)=1$ if $x\in B(x_0, r)$. Outside the ball $B(x_0, 2r)$ we extend the function $\zeta$
by zero. Let us take a function $f\in C_{0}^{\infty}(\mathbb{R}^d)$ with support in the ball $B(x_0, r)$.
According  to \cite[Theorem 1.6]{DK17}, there exists a solution $u\in W^{p, 1}(B(x_0, 3r))\cap W^{p, 1}_0(B(x_0, 3r))$ to the equation
${\rm trace} (AD^2u)=f$ in $B(x_0, 3r)$ with zero boundary condition
that is twice continuously differentiable on $B(x_0, 2r)$. Therefore, the function $u\zeta$
can be substituted into the integral equality defining the solution $\varrho$. We have
\begin{multline*}
\int_{\Omega}f(x)\zeta(x)\varrho(x)\,dx
\\
=-\int_{\Omega}\Bigl[2\langle A(x)\nabla u(x), \nabla\zeta(x)\rangle
+u(x){\rm trace}\bigl(A(x)D^2\zeta(x)\bigr)\Bigr]\varrho(x)\,dx
\\
+\int_{\Omega}{\rm trace}\bigl(G(x)D^2(u(x)\zeta(x))\bigr)\,dx.
\end{multline*}
The function $u$ satisfies the estimate $\|u\|_{W^{p, 2}(B(x_0, 3r))}\le C_1\|f\|_{L^{p}(B(x_0, 3r))}$.
By the Sobolev  embedding theorem $u\in C^1(B(x_0, 3r))$, moreover,
$$
\|u\|_{C^1(B(x_0, 3r))}\le C_2\|u\|_{W^{p, 2}(B(x_0, 3r))}
$$
with some constants $C_1$ and $C_2$, depending only on $p$, $B(x_0, 3r)$, the number $\theta$ and the function $\omega$
from (H1) and (H2). Taking into account that the support of $f$ belongs to $B(x_0, r)$, we obtain
$$
\int_{B(x_0, r)}f(x)\varrho(x)\,dx\le C_3\|f\|_{L^p(B(x_0, r))},
$$
where the constant $C_3$ depends only on $B(x_0, 3r)$, the number $\theta$ and the function $\omega$
from (H1) and (H2). By the Riesz theorem the function $\varrho$ belongs to $L^{p'}(B(x_0, r))$.

We now justify the existence of a continuous version of $\varrho$.
Let $B=B(x_0, r)$ be such that  $B(x_0, 3r)\subset\Omega$.
For the proof it suffices to establish the existence  of a continuous version of the solution
on every such ball $B(x_0, r)$. We can find a  sequence of smooth functions
$a^{ij}_n$ and $g^{ij}_n$ such that for every compact set $K\subset\Omega$ the
sequence $\{a^{ij}_n\}$ converges to $a^{ij}$ uniformly on $K$, the sequence $\{g^{ij}_n\}$ converges to $g^{ij}$ uniformly on $K$,
the functions $a^{ij}_n$ on $K$ satisfy conditions (H1) and (H2) with a number $\theta_K$ and a function $\omega_K$ independent of $n$ and
the functions $a^{ij}_n$, $g^{ij}_n$ satisfy on $B(x_0, 2r)$ the Dini mean oscillation condition with a function $\widetilde{\omega}$
independent of $n$. It is clear that the sequence of functions
$a^{ij}_n$, $b^i_n=0$, $g^{ij}_n$ and $h^i_n=0$ provide an admissible approximation of the functions
$a^{ij}$, $b^i=0$, $g^{ij}$ and $h^i=0$. According to Corollary~\ref{col2}, there exists a sequence of smooth solutions $\varrho_n$ to the equations
$$
{\rm div}^2(\varrho_n A_n)-{\rm div}(\varrho_n b_n)={\rm div}^2G_n-{\rm div}\,h_n
$$
on the ball $B(x_0, 2r)$ such that $\{\varrho_n\}$ converges in $L^{p'}(B(x_0, 2r))$ to the function $\varrho$.
Applying to $\varrho_n$ on $B(x_0, r)$  Theorem~1.10 of  \cite{DK17}, which gives an estimate for the modulus of continuity
of~$\varrho_n$ (see \cite[p.~429]{DK17}), we conclude that there exists a continuous function $w_0$ on $[0, +\infty)$ such  that $w_0(0)=0$ and the inequality
$$
|\varrho_n(x)-\varrho_n(y)|\le w_0(|x-y|)
$$
holds for all $x, y\in \overline{B}(x_0, r)$ and all $n$. Moreover, it follows from estimate (2.25) in \cite{DK17}
that $\sup_n\|\varrho_n\|_{L^{\infty}(\overline{B}(x_0, r))}<\infty$.
Thus, the sequence of functions $\varrho_n$ on the ball $\overline{B}(x_0, r)$ satisfies the Ascoli--Arz\'ela theorem, so this sequence
converges uniformly on $\overline{B}(x_0, r)$ to some continuous function $\widetilde{\varrho}$.
It is clear that the functions $\widetilde{\varrho}$ and $\varrho$ coincide almost everywhere on $\widetilde{B}(x_0, r)$.
\end{proof}

\section{Additional remarks}

As we have already mentioned in the introduction,  and shown in the previous theorem, the results of this paper
provide some missing details in the proof of  Theo\-rem~1.10
from \cite{DK17} and in a number of assertions from other papers deduced from it, including our papers \cite{BRS23} and~\cite{BSumn}.
For convenience of further references, we give precise formulations.
First of all, Theorem~5.2 from our paper \cite{BSumn} is true without additional assumptions about the local boundedness of the density $\varrho$
(added in the English translation, but omitted in the Russian original, although implicitly used through a reference
to \cite[Theorem~1.10]{DK17}). Let us give a complete formulation with a comment on the  proof from~\cite{BSumn}.

\begin{theorem}\label{th52}
Suppose that a locally bounded Borel  measure $\mu$ on $\Omega$ {\rm(}possibly, signed{\rm)}
satisfies the equation $L^{*}_{A, b}\mu+c\mu=0$, where $c$ is a Borel function
locally integrable with respect to~$|\mu|$. Let $A$ satisfy the Dini mean oscillation condition,
${\rm det}\,A>0$, $p>d$, and let one of the two conditions be  fulfilled:
{\rm (i)} $b\in L^p_{loc}(\Omega)$, $c\in L^{p/2}_{loc}(\Omega)$ or
{\rm (ii)} $b\in L^p_{loc}(\mu)$, $c\in L^{p/2}_{loc}(\mu)$.
Then the measure $\mu$ has a continuous density~$\varrho$.
\end{theorem}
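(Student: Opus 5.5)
The plan is to reduce Theorem \ref{th52} to Theorem \ref{th4}: rewrite the lower order terms $b\mu$ and $c\mu$ as divergence-form right-hand sides with regular enough data, after first establishing local integrability of the density. Since $\det A>0$ and $A$ is continuous (Dini mean oscillation implies continuity), on every ball $B$ with $\overline B\subset\Omega$ the matrix $A$ satisfies (H1), (H2) with some $\theta_B,\omega_B$. By the result of \cite{BKR} (see also \cite{BKRS}), $\mu$ has a density $\varrho\in L^{r}_{loc}$ for some $r>1$ under (i) or (ii). In case (ii), $b\in L^p_{loc}(|\mu|)$ means $|b|^p|\varrho|\in L^1_{loc}$, and the higher integrability results of \cite{BKRS} for such drifts give $\varrho\in L^{s}_{loc}$ for all $s<\infty$, in fact $\varrho\in W^{1,s}_{loc}$ when $A$ is Sobolev. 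To avoid needing smoothness of $A$, I will only use $\varrho\in L^{s}_{loc}$ for large $s$, which follows by a bootstrap as in \cite{BKRS} via Theorem \ref{th2}-type estimates.

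Localization: fix $B=B(x_0,r)$ with $B(x_0,4r)\subset\Omega$. The equation reads ${\rm div}^2(\varrho A)={\rm div}(\varrho b)-c\varrho$ in the sense of distributions. Write $c\varrho={\rm div}\,w$ on $B(x_0,3r)$, where $w=\nabla(\Gamma*(c\varrho\, I_{B(x_0,3r)}))$ with $\Gamma$ the Newtonian potential. Then
$$
{\rm div}^2(\varrho A)={\rm div}\,h,\qquad h=\varrho b-w .
$$
The key intermediate goal is to represent ${\rm div}\,h={\rm div}^2 G$ with a continuous $G$ satisfying the Dini mean oscillation condition on a smaller ball, namely $G=\nabla^2\Gamma*(\ldots)$ type operators applied to $h$. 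For this, $h$ must lie in $L^{q}$ with $q>d$, since then $\nabla\Gamma*h\in C^{\alpha}$ is H\"older, hence Dini. Under (i), $\varrho\in L^{s}_{loc}$ for all large $s$ and $b\in L^p$ with $p>d$ give $\varrho b\in L^{q}$ with some $q>d$. Also $c\varrho\in L^{t}$ with $t$ close to $p/2>d/2$, so $w\in W^{1,t}\subset L^{q}$ with $q>d$ by Sobolev. Under (ii), H\"older with respect to $|\varrho|\,dx$ gives $|b\varrho|=|b||\varrho|^{1/p}|\varrho|^{1/p'}$, which lies in $L^{q}$, $q>d$, once $\varrho\in L^{s}_{loc}$ for large $s$, and similarly for $c$. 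Setting $G$ with entries $g^{ij}=\delta^{ij}$-type contraction, i.e.\ $G=v\otimes e$ symmetrized where $v=-\nabla(\Gamma*{\rm div}\,h)$ componentwise, one gets ${\rm div}^2G={\rm div}\,h$, with $G\in C^{\alpha}$.

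Then Theorem \ref{th4} with $b=h=0$ and this continuous Dini $G$ on $B(x_0,2r)$ gives a continuous version of $\varrho$ there. The missing details in \cite{BSumn} are exactly the use of \cite[Theorem~1.10]{DK17} for possibly unbounded $\varrho$, which is now covered by Theorem \ref{th4} via Corollary \ref{col2}.

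The main obstacle is the initial integrability $\varrho\in L^{s}_{loc}$ for large $s$ without boundedness, particularly in case (ii). If the results cited from \cite{BKRS} are unavailable in the required form, I would iterate: from $\varrho\in L^{r_k}$, build $h\in L^{q_k}$ and $G\in W^{1,q_k}$, then use the $L^{p'}$ estimate of Theorem \ref{th2}, or its $L^{s}$ analogue obtained by duality with $W^{s',2}$ solvability, to raise the exponent until $q_k>d$.
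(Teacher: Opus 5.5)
Your plan is essentially the paper's proof. First you obtain $\varrho\in L^r_{loc}(\Omega)$ for all large $r$; the paper simply cites \cite[Theorem 3.5]{BSumn} here, while you cite \cite{BKRS} and sketch a duality bootstrap as a fallback. Then you absorb ${\rm div}(\varrho b)-c\varrho$ into a H\"older continuous matrix $G=w\,{\rm I}$ with $\Delta w={\rm div}(\varrho b)-c\varrho$. The paper gets $w$ by solving a Dirichlet problem for $\Delta$ on a ball; your Newtonian potentials do the same job locally. Finally you apply Theorem~\ref{th4}.

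One step fails as written and needs fixing: your final formula for $G$.

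\begin{itemize}
\item The field $v=-\nabla(\Gamma*{\rm div}\,h)$ is a Calder\'on--Zygmund image of $h$, so it is only in $L^q$, not $C^\alpha$.
\item For a fixed vector $e$, the symmetrization of $v\otimes e$ has ${\rm div}^2$ equal to $\partial_e{\rm div}\,v$, not ${\rm div}\,h$.
\end{itemize}

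Take instead $G=\phi\,{\rm I}$ with $\phi=\sum_i\partial_{x_i}\Gamma*(h^iI_{B(x_0,3r)})$, where $\Delta\Gamma=\delta$. Then ${\rm div}^2G=\Delta\phi={\rm div}\,h$ on $B(x_0,3r)$, and $\phi\in W^{q,1}_{loc}\subset C^{1-d/q}$ for $h\in L^q$ with $q>d$. This is what your earlier remark that $\nabla\Gamma*h$ is H\"older was pointing to, and it is exactly the paper's $G=w\,{\rm I}$.
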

\begin{proof}
According to \cite[Theorem 3.5]{BSumn}, the measure $\mu$ is given by a density $\varrho$
with respect to Lebesgue  measure and $\varrho\in L^r_{loc}(\Omega)$ for all $r\ge 1$.
Let $B=B(x_0, R)$, where $B(x_0, 2R)\subset\Omega$. Consider the  Dirichlet problem
$$
\Delta w={\rm div}(\varrho b)-c\varrho, \quad w|_{\partial B}=0.
$$
Repeating the reasoning from the proof of Theorem 5.2 in \cite{BSumn}, we conclude that $w\in W^{q, 1}(B)$
with some $q>d$. By the Sobolev  embedding theorem $w$ has a H\"older continuous version. Set $G=w{\rm I}$.
Then ${\rm div}^2(\varrho A)={\rm div}^2G$ and Theorem \ref{th4} applies.
\end{proof}

Next, Theorem 5.1 and Theorem 5.8  from \cite{BSumn}, Theorem 3.5 from \cite{BRS23} and Theorem 4.3 from~\cite{GKim},
 which give sufficient conditions for the validity of the Harnack inequality for nonnega\-tive solutions to double divergence form
elliptic equations are true without the additional assumption about the local boundedness of $\varrho$
(this assumption was added in the English translation of~\cite{BSumn} and omitted in the Russian original,
although also implicitly used through a reference
to \cite[Theorem~1.10]{DK17}).
Here we also give a complete formulation with a comment about the proof.

\begin{theorem}\label{th5158}
Suppose  that a locally bounded  nonnegative Borel measure $\mu$ on $\Omega$
satisfies the  equation $L^{*}_{A, b}\mu+c\mu=0$, where $c$ is a Borel function
locally integrable with respect to~$\mu$. Let $A$ satisfy the Dini mean oscillation condition,
${\rm det}\,A>0$, $p>d$, and let $b\in L^p_{loc}(\Omega)$, $c\in L^{p/2}_{loc}(\Omega)$.
Then $\mu$ has a  continuous density~$\varrho$ and for every ball $B(x_0, R)$ with $B(x_0, 4R)\subset\Omega$
there is a  number $C>0$ such that
$$
\sup_{x\in B(x_0, R)}\varrho(x)\le C\inf_{x\in B(x_0, R)}\varrho(x),
$$
where $C$ depends only on $d$, $R$, $\sup_{B(x_0, 4R)}(\|A(x)\|+\|A(x)^{-1}\|)$, the function $\widetilde{\omega}$
from the Dini mean oscillation condition for the matrix $A$ and on the quantities $\|b\|_{L^p(B(x_0, 4R))}$ and $\|c\|_{L^{p/2}(B(x_0, 4R))}$.
\end{theorem}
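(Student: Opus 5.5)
The plan follows the scheme used for Theorem \ref{th52}: first establish continuity of the density and reduce the equation to the case $b=0$, $c=0$ with a Dini continuous right-hand side. Then obtain the Harnack inequality by approximating with smooth nonnegative solutions, for which the Harnack estimate from \cite{BSumn}, \cite{BRS23}, \cite{DEK18} was already justified.

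\emph{Step 1: continuity and integrability.} By \cite[Theorem 3.5]{BSumn}, $\mu=\varrho\,dx$ with $\varrho\in L^r_{loc}(\Omega)$ for all $r\ge 1$. Theorem \ref{th52}(i) then gives a continuous version of $\varrho$, and $\varrho\ge0$ since $\mu\ge 0$.

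\emph{Step 2: reduction to an equation without lower order terms.} On $B(x_0,4R)$ (or a slightly smaller ball) let $w$ solve $\Delta w={\rm div}(\varrho b)-c\varrho$ with $w=0$ on the boundary. As in Theorem \ref{th52}, $w\in W^{q,1}$ for some $q>d$, so $w$ is H\"older continuous. The equation then becomes ${\rm div}^2(\varrho A)={\rm div}^2(w{\rm I})$. This form is not the right one for a Harnack inequality, since the right-hand side is not homogeneous in $\varrho$. It is better to keep the original structure and approximate it.

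\emph{Step 3: approximation.} Take an admissible approximation $A_n,b_n$ as in Section 3, with $A_n$ having a uniform Dini mean oscillation modulus, and treat $c\varrho$ as ${\rm div}\,h_0$ or keep it as a zero-order term. Using Theorem \ref{th1} on $D=B(x_0,3R)$, the function $\varrho$ solves a Dirichlet problem with a nonnegative boundary measure $\eta$. I would then solve the smooth Dirichlet problems with nonnegative smooth densities $\eta_n\to\eta$ weakly. Corollary \ref{col1} gives $\varrho_n\ge0$, and Theorem \ref{th3} gives $\varrho_n\to\varrho$ in $L^{p'}$.

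The zero-order term $c$ is the delicate point, because Theorems \ref{th2}, \ref{th3} and Corollary \ref{col1} are stated without $c$. I would first try to absorb $c\varrho$ into the drift, writing $c\varrho={\rm div}(\varrho\beta)$ locally, but this is not straightforward. Alternatively, I would rerun the a priori estimate of Theorem \ref{th2} with the adjoint problem $L_{A,b}u+cu=f$, using the smallness of $\|c\|_{L^{p/2}}$ on small balls, covering by small balls, and the positivity of the operator. For smooth $\varrho_n$, the Harnack inequality from \cite{BRS23}, \cite[Theorem 5.1]{BSumn} holds with a constant depending only on the listed quantities. These quantities are uniform in $n$ by admissibility.

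\emph{Step 4: passage to the limit.} The uniform Harnack bound together with the uniform modulus of continuity from \cite{DK17} (as in the proof of Theorem \ref{th4}) gives equicontinuity and local uniform boundedness of $\varrho_n$ on $\overline B(x_0,R)$. Arzel\`a--Ascoli then yields uniform convergence to the continuous $\varrho$. The inequality $\sup\varrho_n\le C\inf\varrho_n$ passes to the limit.

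The main obstacle is Step 3 with the term $c$, namely producing nonnegative smooth approximants that converge to $\varrho$ when $c\ne0$. If Theorem \ref{th2} cannot be extended directly, I would restrict to sufficiently small balls, where the zeroth-order term is a small perturbation. I would then transfer the result to $B(x_0,R)$ by a chaining argument.
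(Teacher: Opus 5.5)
Your Step 1 coincides with the paper's first step, and in the paper that step is essentially the whole proof. Once Theorem \ref{th52} gives a continuous (hence locally bounded) density $\varrho$, the Harnack argument of \cite{GKim} applies without change. The idea you are missing is why the earlier Harnack proofs in \cite{GKim}, \cite{BRS23}, \cite{BSumn} were defective. Their only flaw was that they relied, through \cite{DEK18}, on \cite[Theorem~1.10]{DK17} for solutions not known to be locally bounded. Theorems \ref{th4} and \ref{th52} repair exactly this point. So after Step 1 you need no further approximation of $\varrho$ to obtain the Harnack inequality.

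Your Steps 2--4 instead try to rebuild the inequality from smooth nonnegative approximants, and Step 3 is a genuine gap when $c\neq 0$, as you suspect. The Dirichlet theory of Theorems \ref{th2}, \ref{th3} and Corollary \ref{col1} contains no zero-order term, and neither way of handling $c$ works as stated.

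\begin{itemize}
\item If you write $c\varrho={\rm div}\,h_0$, you are in the inhomogeneous case. Corollary \ref{col1} requires $G=0$ and $h=0$, so the approximants $\varrho_n$ need not be nonnegative. They also satisfy no homogeneous Harnack inequality $\sup\varrho_n\le C\inf\varrho_n$.
\item If you keep $c$, extending Theorem \ref{th2} is not routine, for three reasons. First, in the class $L^{p'}$ where uniqueness is proved, the product $c\varrho$ with $c\in L^{p/2}$ need not be integrable. Second, the adjoint problem $L_{A,b}u+cu=f$ in general only gives $u\in W^{p/2,2}(D)$. For $d<p\le 2d$ this space is not contained in $C^1(\overline D)$, so the boundary term $\int_{\partial D}\langle A\nabla u,\nu\rangle\,d\eta$ is undefined. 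Third, positivity of the approximants requires a sign condition on $c$ or smallness of the domain.
\end{itemize}

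Your fallback of ``smallness on small balls plus chaining'' is only named, not carried out. As written, the proposal therefore does not prove the Harnack inequality when $c\neq 0$.

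For $c=0$ your route does go through, since it is essentially the second part of Corollary \ref{col2}. Even then, Step 4 does not need the equicontinuity from \cite{DK17}, which in any case is stated only for $b=0$. Convergence a.e.\ along a subsequence, together with the continuity of $\varrho$, already transfers $\sup\varrho_n\le C\inf\varrho_n$ to the limit.
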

\begin{proof}
Applying Theorem \ref{th52}, we conclude that  $\mu$ has a continuous density~$\varrho$. Now without any changes the reasoning
from~\cite{GKim} applies.
\end{proof}

It follows that $\varrho$ is positive if $\Omega$ is connected and $\mu$ is not zero.

It would be interesting to obtain analogous results for double divergence form parabolic equations.

{\bf Acknowledgements}
We thank Professor Hongjie Dong for helpful suggestions.
This research was supported by the Russian Science Foun\-da\-tion Grant 25-11-00007 at Lo\-mo\-no\-sov
Moscow State University.

\end{document}